\documentclass[12pt]{amsart}       
\usepackage{txfonts}
\usepackage{amssymb}
\usepackage{bbm}
\usepackage{eucal}
\usepackage{graphicx}
\usepackage{amsmath}
\usepackage{amscd}
\usepackage[all]{xy}           
\usepackage{amsfonts,latexsym}
\usepackage{xspace}
\usepackage{epsfig}
\usepackage{float}
\usepackage{mathrsfs} 
\usepackage{color}
\usepackage{fancybox}
\usepackage{colordvi}
\usepackage{multicol}
\usepackage{colordvi}
\usepackage{ifpdf}
\ifpdf
  \usepackage[colorlinks,final,backref=page,hyperindex]{hyperref}
\else
  \usepackage[colorlinks,final,backref=page,hyperindex,hypertex]{hyperref}
\fi
\usepackage[active]{srcltx} 
\usepackage{tikz}
\usepackage{graphicx}
\usepackage{CJK}
\newtheorem{theorem}{Theorem}[section]

\newtheorem{proposition}[theorem]{Proposition}

\newtheorem{corollary}[theorem]{Corollary}
\newtheorem{prop-def}{Proposition-Definition}[section]
\newtheorem{coro-def}{Corollary-Definition}[section]

\theoremstyle{definition}
\newtheorem{definition}[theorem]{Definition}
\newtheorem{remark}[theorem]{Remark}

\newcommand{\nc}{\newcommand}
\nc{\tred}[1]{\textcolor{red}{#1}}
\nc{\tblue}[1]{\textcolor{blue}{#1}}
\nc{\tgreen}[1]{\textcolor{green}{#1}}
\nc{\tpurple}[1]{\textcolor{purple}{#1}}
\nc{\btred}[1]{\textcolor{red}{\bf #1}}
\nc{\btblue}[1]{\textcolor{blue}{\bf #1}}
\nc{\btgreen}[1]{\textcolor{green}{\bf #1}}
\nc{\btpurple}[1]{\textcolor{purple}{\bf #1}}
\nc{\NN}{{\mathbb N}}

\nc{\NANZ}{\mathbb{Z}_{\ge 0}^{A\times \mathbb{N}}} 
\nc{\NANMO}{\mathbb{Z}_{\ge 0}^{I_A}} 

\nc{\ncsha}{{\mbox{\cyr X}^{\mathrm NC}}} \nc{\ncshao}{{\mbox{\cyr
X}^{\mathrm NC}_0}}

\newcommand{\delete}[1]{}

\delete{
\nc{\mlabel}[1]{\label{#1}}
\nc{\mcite}[1]{\cite{#1}}
\nc{\mref}[1]{\ref{#1}}
\nc{\meqref}[1]{\eqref{#1}}
\nc{\mbibitem}[1]{\bibitem{#1}}
}

\nc{\mlabel}[1]{\label{#1}{\hfill \hspace{1cm}{\bf{{\ }\hfill(#1)}}}}
\nc{\mcite}[1]{\cite{#1}{{\bf{{\ }(#1)}}}}
\nc{\mref}[1]{\ref{#1}{{\bf{{\ }(#1)}}}}
\nc{\meqref}[1]{\eqref{#1}{{\bf{{\ }(#1)}}}}
\nc{\mbibitem}[1]{\bibitem[\bf #1]{#1}}
\font\cyr=wncyr10 
\font\scyr=wncyr6
\nc{\sha}{{\mbox{\scyr X}}}
\nc{\shap}{{\mbox{\cyrs X}}} 
\nc{\shpr}{\diamond}    
\nc{\shp}{\ast} \nc{\shplus}{\shpr^+}
\nc{\shprc}{\shpr_c}    
\nc{\dep}{\mrm{dep}} \nc{\lc}{\lfloor} \nc{\rc}{\rfloor}
\nc{\db}{\leq_{\rm db}} \nc{\bfk}{\bf k}

\nc{\cala}{{\mathcal A}} \nc{\calb}{{\mathcal B}}
\nc{\calc}{{\mathcal C}}
\nc{\cald}{{\mathcal D}} \nc{\cale}{{\mathcal E}}
\nc{\calf}{{\mathcal F}} \nc{\calg}{{\mathcal G}}
\nc{\calh}{{\mathcal H}} \nc{\cali}{{\mathcal I}}
\nc{\call}{{\mathcal L}} \nc{\calm}{{\mathcal M}}
\nc{\caln}{{\mathcal N}} \nc{\calo}{{\mathcal O}}
\nc{\calp}{{\mathcal P}} \nc{\calr}{{\mathcal R}}
\nc{\cals}{{\mathcal S}} \nc{\calt}{{\mathcal T}}
\nc{\calu}{{\mathcal U}} \nc{\calw}{{\mathcal W}} \nc{\calk}{{\mathcal K}}
\nc{\calx}{{\mathcal X}} \nc{\CA}{\mathcal{A}}

\nc{\fraka}{{\mathfrak a}} \nc{\frakA}{{\mathfrak A}}
\nc{\frakb}{{\mathfrak b}} \nc{\frakB}{{\mathfrak B}}
\nc{\frakc}{{\mathfrak c}}
\nc{\frakD}{{\mathfrak D}} \nc{\frakF}{\mathfrak{F}}
\nc{\frakf}{{\mathfrak f}} \nc{\frakg}{{\mathfrak g}}
\nc{\frakH}{{\mathfrak H}} \nc{\frakL}{{\mathfrak L}}
\nc{\frakM}{{\mathfrak M}} \nc{\bfrakM}{\overline{\frakM}}
\nc{\frakm}{{\mathfrak m}} \nc{\frakP}{{\mathfrak P}}
\nc{\frakN}{{\mathfrak N}} \nc{\frakp}{{\mathfrak p}}
\nc{\frakS}{{\mathfrak S}} \nc{\frakT}{\mathfrak{T}}
\nc{\frakX}{{\mathfrak X}}

\font\cyr=wncyr10 \font\cyrs=wncyr7
\nc{\tao}[1]{\textcolor{red}{Tao:#1}}
\nc{\lir}[1]{\textcolor{red}{Li:#1}}
\nc{\zhi}[1]{\textcolor{blue}{Zhi: #1}}
\nc{\xing}[1]{\textcolor{purple}{Xing:#1}}
\nc{\revise}[1]{\textcolor{red}{#1}}

\newcommand{\wt}{\operatorname{wt}}
\newcommand{\Fib}{\mathcal{F}}

\newcommand{\uvec}{\mathbf{u}}

\newcommand{\jmathop}{\jmath}

\newcommand{\Aut}{\operatorname{Aut}}
\newcommand{\kk}{{\bfk}} \newcommand{\xx}{{\bf x}} 
\nc{\bl}{{\boldsymbol\ell}}

\providecommand{\bb}{\mathbf{b}}

\providecommand{\ext}{\mathbin{\odot}}

\providecommand{\mm}{\mathbf{m}}
\providecommand{\ee}{\mathbf{e}}

\begin{document}
\begin{CJK*}{GBK}{song}
\title[Fertility fibres and coproduct coefficients in the LOT Hopf algebra]{Fertility fibres and coproduct coefficients in the LOT Hopf algebra}
%
\author{Zhicheng Zhu}
\address{School of Mathematics and Statistics, Lanzhou University, Lanzhou, Gansu 730000, P.\,R. China}
\email{zhuzhch16@lzu.edu.cn}

\author{Jingtao Li}
\address{School of Mathematics and Statistics, Lanzhou University, Lanzhou, Gansu 730000, P.\,R. China}
\email{ljingtao2025@lzu.edu.cn}

\author{Xing Gao$^{*}$}\thanks{*Corresponding author}
\address{School of Mathematics and Statistics, Lanzhou University,
Lanzhou, 730000, China;
Gansu Provincial Research Center for Basic Disciplines of Mathematics
and Statistics, Lanzhou, 730070, China
}
\email{gaoxing@lzu.edu.cn}

\date{\today}
\begin{abstract}
We study fibres of the fertility map $\Phi$ from decorated rooted trees to decorated
multi-index monomials. For a multi-index $\mathbf{k}$ of weight $-1$, the fibre
$\mathcal F_{\mathbf{k}}=\{\,t:\Phi(t)=\xx^{\mathbf{k}}\,\}$ consists of all rooted
trees with decoration--fertility profile $\mathbf{k}$.  We consider its ordinary
cardinality $F_{\mathbf{k}}$, its symmetry-weighted cardinality $W_{\mathbf{k}}$, and
the coefficient mass $J_{\mathbf{k}}$ appearing in the tree expansion of the
transposed embedding $\jmath$.
We obtain an explicit formula and a functional equation for the weighted counts, and
an exact multiset recursion together with a cycle-index functional equation for the
ordinary counts.  We also introduce coefficient generating functions for the lowering
derivation $\bar\partial$, derive recursive and transport-array formulas for the
corresponding coefficients, and use them to refine the admissible-cut formula for the
coproduct in the LOT Hopf algebra.
\end{abstract}

\makeatletter
\@namedef{subjclassname@2020}{\textup{2020} Mathematics Subject Classification}
\makeatother
\subjclass[2020]{
16T30;   
05A15;   
05C05.  
}

\keywords{LOT Hopf algebra;  fertility map;
rooted trees; fibre enumeration;   cycle-index methods;
  coproduct coefficients.  }

\maketitle

\tableofcontents

\setcounter{section}{0}

\allowdisplaybreaks

\section{Introduction}\label{sec:introduction}

\subsection{Rooted trees and Hopf algebras}

Rooted trees and their Hopf algebras play a central role in numerical analysis,
rough paths, renormalization, and the algebraic study of nonlinear equations.  The
Butcher--Connes--Kreimer (BCK) Hopf algebra is the commutative graded
Hopf algebra generated by rooted forests, with coproduct described by admissible cuts.
It provides a universal combinatorial framework for rooted-tree expansions, including
the classical $B$-series of numerical analysis and the renormalization Hopf algebra of
Connes and Kreimer
\cite{Brouder2000,Butcher1963,Butcher1972,CalaqueEbrahimiFardManchon2011,ConnesKreimer1998,Foissy2002,GrossmanLarson1989}.
In the decorated setting, the basis elements are decorated rooted forests, and the
coproduct separates each forest into a pruning part and a trunk part by summing over
admissible cuts.

The BCK Hopf algebra is closely related to the Grossman--Larson Hopf algebra of rooted
trees: it may be viewed as a graded dual Hopf algebra after choosing a suitable
pairing.  On the algebraic side, the free pre-Lie algebra generated by a decoration
set $A$ is realized by $A$-decorated rooted trees, while the free Novikov algebra
admits a description in terms of decorated multi-index monomials
\cite{BalinskiiNovikov1985,ChapotonLivernet2001,DL2002,GelfandDorfman1979,GuinOudom2008,Manchon2011,Osborn1992}.
The bridge between these two descriptions is the fertility map
\[
\Phi(t)
=
\prod_{v\in V(t)}
\xx_{f(v)-1}^{d(v)},
\]
which sends a decorated rooted tree to the multi-index monomial recording the
decoration and fertility of each vertex.

\subsection{The LOT Hopf algebra of decorated multi-indices}

The Hopf algebra of decorated multi-indices was introduced by Linares, Otto and
Tempelmayr as a tree-free combinatorial framework in the study of rough paths and
regularity structures.  We refer to it as the LOT Hopf algebra.  Its basis is indexed
by decorated multi-indices rather than by rooted trees, and this makes it suitable for
situations in which tree expansions are compressed into multi-index data
\cite{Gubinelli2010,Hairer2014,LOT2023,Lyons1998,LyonsCaruanaLevy2007,LyonsVictoir2007}.
Recent works have further developed this multi-index viewpoint in regularity
structures, rough paths, renormalization, and numerical $B$-series
\cite{BrunedEbrahimiFardHou2025,BrunedHairerZambotti2019,BrunedHou2025,BrunedLinares2024,JacquesZambotti2023,Linares2023}.

One important algebraic description of the multi-index coproducts was given by Bruned
and Hou~\cite{BrunedHou2025}.  Starting from explicit descriptions of the corresponding
Grossman--Larson products, they derived explicit formulae for the relevant coproducts
by dualization.  Their approach provides an algebraic counterpart to cut-based
descriptions of coproducts.  A closely related but differently normalized
combinatorial description was obtained by Manchon et al.~\cite{ZGM2026}.  They
showed that the fertility map induces, by transposition, an embedding of the LOT Hopf
algebra into the BCK Hopf algebra, and they obtained an admissible-cut formula for the
coproduct of decorated multi-indices.

The two formalisms lead to equivalent coproducts after a change of normalization, but
they use different pairings and hence different notions of symmetry factor for
multi-indices.  Consequently, the coefficients appearing in the explicit coproduct
formulae are not the same.  In the present paper we follow the symmetry-factor
convention of~\cite{ZGM2026}, because this normalization is naturally adapted to the
admissible-cut interpretation and to the fibre enumeration studied below.

\subsection{Why enumerate fertility fibres?}
Under the transposed embedding in~\cite{ZGM2026}, a single multi-index monomial
$\xx^{\mathbf{k}}$ represents the whole fibre
\[
\mathcal F_{\mathbf{k}}
=
\{\,t:\Phi(t)=\xx^{\mathbf{k}}\,\},
\]
that is, the class of decorated rooted trees with decoration--fertility profile
$\mathbf{k}$.  Thus the fertility map $\Phi$ compresses tree-level information into
multi-index data, while the embedding $\jmath$ records how a compressed multi-index
object expands back into rooted trees.

The starting point of this paper is that this compression is useful but not
enumeratively transparent.  A multi-index monomial is not merely a formal label: it
represents a whole family of rooted trees.  It is therefore natural to ask how large
the fibre $\mathcal F_{\mathbf{k}}$ is, how the automorphisms of trees in this fibre
contribute to the embedding $\jmath$, and how the resulting coefficients interact
with the LOT coproduct.

This leads to two different enumeration problems.  The first is the
symmetry-weighted enumeration, which is naturally compatible with the Hopf embedding
and the symmetry-factor convention of~\cite{ZGM2026}.  The second is the ordinary
enumeration of isomorphism classes of rooted trees, where automorphisms are not
divided out.  In the ordinary case, the branches below a root form an unordered
collection, and this is why multiset and cycle-index methods enter the picture.

\subsection{Main results and significance}

The purpose of the present paper is to develop an enumerative theory of the fibres of
the fertility map.  For each multi-index $\mathbf{k}$ of weight $-1$, we study the
fibre
\[
\mathcal F_{\mathbf{k}}
=
\{\,t:\Phi(t)=\xx^{\mathbf{k}}\,\}
\]
through three quantities: the ordinary fibre count
$F_{\mathbf{k}}:=|\mathcal F_{\mathbf{k}}|$, the symmetry-weighted count
\[
W_{\mathbf{k}}
=
\sum_{t\in\mathcal F_{\mathbf{k}}}\frac{1}{\sigma(t)},
\]
and the coefficient mass
\[
J_{\mathbf{k}}
=
\sum_{t\in\mathcal F_{\mathbf{k}}}
\frac{\sigma(\xx^{\mathbf{k}})}{\sigma(t)}.
\]
The quantity $J_{\mathbf{k}}$ is the total coefficient appearing in the tree expansion
of $\jmath(\xx^{\mathbf{k}})$.

The main contributions of the paper are as follows.

\begin{enumerate}
    \item We relate the fibre $\mathcal F_{\mathbf{k}}$ to the transposed embedding
    $\jmath$ (Definition~\ref{def:fibre}) and to the three quantities $F_{\mathbf{k}}$, $W_{\mathbf{k}}$, and
    $J_{\mathbf{k}}$ (Definition~\ref{def:counting}).

    \item We prove an explicit formula for $W_{\mathbf{k}}$ by passing to labelled
    rooted trees and using the Pr\"ufer correspondence with prescribed fertilities (Theorem~\ref{thm:weighted-formula}).  This also gives a closed formula for the coefficient
    mass $W_{\mathbf{k}}$ and $L_{\mathbf{k}}$ (Proposition~\ref{prop:weighted-recursion}).

    \item We derive a root-decomposition recursion for the weighted fibre counts,
    which yields the functional equation for the weighted generating series
    $T(\mathbf u)$ (Theorem~\ref{thm:weighted-functional}).

    \item We obtain an exact recursion for the ordinary fibre cardinalities
    $F_{\mathbf{k}}$ (Theorem~\ref{thm:ordinary-recursion}).  Since ordinary enumeration does not divide by automorphism
    groups, the correct root decomposition is expressed in terms of multisets of
    branch isomorphism classes.

    \item We package the ordinary recursion into an Euler-product and cycle-index
    functional equation, using the classical cycle-index formalism of P\'olya theory (Proposition~\ref{prop:ordinary-functional}).

    \item We introduce coefficient generating functions for the lowering derivation
    $\bar\partial$ (Definition~\ref{def:lowering}), derive recursive formulas for the coefficients
    $C_{\mathbf{k},\boldsymbol\ell}$ and $D_{\mathbf{k},\boldsymbol\ell}$ (Propositions~\ref{prop:C-recursion} and~\ref{prop:D-recursion}), and give a
    transport-array formula for the corresponding transition coefficients (Theorem~\ref{thm:explicit-transition}).

    \item We use these coefficients to refine the admissible-cut formula for the LOT
    coproduct by expanding the full trunk terms in the basis of multi-index monomials (Corollary~\ref{coro:cut-coproduct-refined}).
\end{enumerate}


These results make precise how much tree-level information is compressed by the
fertility map $\Phi$.  They also separate two enumerative regimes that behave quite
differently: the symmetry-weighted regime attached to the embedding $\jmath$, and the
ordinary regime governed by unordered multisets of rooted branches.  Finally, the
coefficient generating functions for the lowering derivation provide a computable way
to expand the full trunk terms appearing in the admissible-cut coproduct.  In this
way, fibre enumeration, symmetry factors, and coproduct coefficients are connected in
a single framework.

This framework may be useful for future comparisons between tree-based and
multi-index-based expansions in rough paths, regularity structures, and related
Hopf-algebraic constructions.

\subsection{Organization of the paper}

Section~\ref{sec:fertility} recalls the fertility map, and introduces its fibres, and the coefficient
mass associated with the embedding $\jmath$.  Section~\ref{sec:weighted} proves the
weighted fibre formula for $W_{\mathbf k}$ (Theorem~\ref{thm:weighted-formula}) and the functional equation for $T(\mathbf u)$ (Theorem~\ref{thm:weighted-functional}).
Section~\ref{sec:ordinary} develops the ordinary fibre enumeration and its
cycle-index form (Proposition~\ref{prop:ordinary-functional}).  Section~\ref{sec:coeff} studies coefficient generating functions
for the lowering derivation (Propositions~\ref{prop:C-recursion} and~\ref{prop:D-recursion}) and applies them to the refinement of the admissible-cut coproduct (Corollary~\ref{coro:cut-coproduct-refined}).

\smallskip

\noindent\textbf{Notation.}
Throughout the paper, we work over a field $\mathbb K$ of characteristic zero, which serves as the base field
for all vector spaces, tensor products, algebras, coalgebras, and linear maps under
consideration.  The decoration set $A$ is assumed to be finite.
All multi-indices are indexed by $I_A:=A\times\mathbb Z_{\ge -1}$ and are finitely
supported.

\section{The fertility map, its fibres, and the basic coefficient series}\label{sec:fertility}

This section fixes the notation and proves the first structural facts needed for the fibre-enumeration program.
The central objects are the fertility map $\Phi$, its fibres, the associated counting functions, and the coefficient generating function attached to the transposed embedding.

We first fix the notation for decorated multi-indices and for the monomials that they determine.

\begin{definition}[\cite{BrunedLinares2024, LOT2023, ZGM2026}]\label{def:multi}
\begin{enumerate}
\item An {\bf $A$-decorated multi-index} is a finitely supported map
\[
\mathbf{k}:I_A\longrightarrow \mathbb Z_{\ge 0}, \quad (a,j)\mapsto \mathbf{k}(a,j).
\]
For simplicity, we write
$$k_j^a:=\mathbf{k}(a,j), \quad \kk=(k_j^a)_{a\in A,\; j\ge -1}.$$
The set of all $A$-decorated multi-indices is denoted by $\NANMO$.

\item
The {\bf monomial} associated with $\mathbf{k}$ is
\[
\xx^{\mathbf{k}}
:=
\prod_{(a,j)\in I_A} (x_j^a)^{k_j^a}
=
\prod_{a\in A}\prod_{j\ge -1}(x_j^a)^{k_j^a}.
\]

\item
The {\bf degree and weight} of $\mathbf{k}$ are defined by
\[
|\mathbf{k}|
=
\sum_{a\in A}\sum_{j\ge -1}k_j^a,
\qquad
\operatorname{wt}(\mathbf{k})
=
\sum_{a\in A}\sum_{j\ge -1}j\,k_j^a.
\]

\item Its {\bf symmetry factor} is
\begin{equation}\label{eq:sigma-monomial}
\sigma(\xx^{\mathbf{k}})
:=
\prod_{(a,j)\in I_A} k_j^a!
=
\prod_{a\in A}\prod_{j\ge -1} k_j^a!.
\end{equation}
\end{enumerate}
\end{definition}

The next definition introduces the rooted-tree side of the construction and the fertility map
from rooted trees to multi-index monomials.

\begin{definition}[\cite{GrossmanLarson1989,ZGM2026}]\label{def:fertility-map}
\begin{enumerate}
\item
An $A$-{\bf decorated rooted tree} is a finite rooted tree $t$ endowed with a {\bf decoration map}
$d:V(t)\to A$.

\item For a vertex $v\in V(t)$, the {\bf fertility} $f(v)$ is the number of incoming edges of $v$, equivalently, the number of children of $v$.
All edges are oriented towards the root.

\item For each pair $(a,j)\in I_A$, define
\begin{equation*}
k_j^a(t):=\#\{v\in V(t): d(v)=a,\ f(v)=j+1\}.
\end{equation*}
The {\bf profile} of $t$ is the multi-index $$\kk(t) :=(k_j^a(t))_{a\in A,\; j\ge -1},$$ 
and the {\bf fertility map} is
\begin{equation*}
\Phi(t):=\prod_{v\in V(t)} x_{f(v)-1}^{d(v)}=\xx^{\kk(t)}.
\end{equation*}
\end{enumerate}
\end{definition}

\begin{remark}\label{rk:balance}
Let $t$ be an $A$-decorated rooted tree, and write $\Phi(t)=\xx^{\kk}$. Then~\cite{ZGM2026}
\begin{equation*}
|\kk|=|V(t)|, \quad \sum_{a\in A}\sum_{j\ge -1}(j+1)k_j^a=|E(t)|=|V(t)|-1=|\kk|-1,
\end{equation*}
and therefore
$\wt(\kk)=-1.$
\end{remark}

Having defined the fertility map, we can group rooted trees according to their common
decoration--fertility profile.

\begin{definition}\label{def:fibre}
For any multi-index $\kk$, the {\bf fibre} of $\Phi$ over $\xx^{\kk}$ is defined to be
\begin{equation*}
\Fib_\kk:=\{\,t:\Phi(t)=\xx^{\kk}\,\},
\end{equation*}
where the set on the right is taken over all isomorphism classes of $A$-decorated rooted trees.
\end{definition}

Equivalently, $t\in \Fib_\kk$ if and only if $t$ has exactly $k_j^a$ vertices of decoration $a$ and fertility $j+1$ for every pair $(a,j)$.
Notice that the set $\Fib_\kk$ is finite for every multi-index $\kk$~\cite{ZGM2026}.

We list below several concepts that will be used frequently in the remainder of the paper.
To avoid confusing algebraic monomials with bookkeeping monomials in generating series,
we use $\xx^{\kk}$ for monomials in the LOT Hopf algebra and $\mathbf{u}^{\kk}$ for
monomials in the generating series, where
$\mathbf{u}=(u_{a,j})_{a\in A,\ j\ge -1}$ is a separate family of variables.

\begin{definition}\label{def:counting}
For each multi-index $\kk$, define
\begin{enumerate}
    \item the {\bf ordinary fibre cardinality}
$    F_\kk:=|\Fib_\kk|$;
    \item the {\bf symmetry-weighted fibre cardinality}
    \begin{equation*}
    W_\kk:=\sum_{t\in \Fib_\kk}\frac{1}{\sigma(t)}, \quad\text{where }\sigma(t):=|\Aut(t)|;
    \end{equation*}
    \item the {\bf coefficient mass}
    \begin{equation*}
    J_\kk:=\sum_{t\in \Fib_\kk}\frac{\sigma(\xx^{\kk})}{\sigma(t)}.
    \end{equation*}

\item For a family of variables $\uvec=(u_{a,j})_{a\in A,\; j\ge -1}$, set
\begin{equation*}
\uvec^\kk:=\prod_{a\in A}\prod_{j\ge -1} u_{a,j}^{k_j^a},
\end{equation*}
which is a finite product because $\kk$ has finite support.

\item The associated generating series are
\begin{equation*}
F(\uvec):=\sum_{\wt(\kk)=-1} F_\kk\,\uvec^\kk, \quad T(\uvec):=\sum_{\wt(\kk)=-1} W_\kk\,\uvec^\kk,
\end{equation*}
and the {\bf coefficient generating function}
\begin{equation*}
J(\uvec):=\sum_{\wt(\kk)=-1} J_\kk\,\uvec^\kk.
\end{equation*}
\end{enumerate}
\end{definition}

Let $\mathcal{V}_{\mathrm{LOT}}^A$ be the $\mathbb{K}$-vector space spanned by the monomials $\xx^{\kk}$ with $\wt(\kk)=-1$, and let $\mathcal{V}_{\mathrm{BCK}}^A$ be the $\mathbb{K}$-vector space spanned by isomorphism classes of finite $A$-decorated rooted trees.
The transposed embedding is the unique linear map~\cite{ZGM2026}
\begin{equation}\label{eq:jmath-def}
\jmathop:\mathcal{V}_{\mathrm{LOT}}^A\to \mathcal{V}_{\mathrm{BCK}}^A
\end{equation}
such that
\begin{equation*}
\langle t,\jmathop(\xx^{\kk})\rangle_{\mathrm{BCK}}
=
\langle \Phi(t),\xx^{\kk}\rangle_{\mathrm{LOT}}
\end{equation*}
for every decorated rooted tree $t$ and every monomial $\xx^{\kk}$ of weight $-1$. Here, the pairings on basis elements are given by   
\begin{equation*}
\langle \xx^{\kk},\xx^{\bl}\rangle_{\mathrm{LOT}}:=\sigma(\xx^{\kk})\,\delta_{\kk,{\bf \ell}}, \quad \langle t,u\rangle_{\mathrm{BCK}}:=\sigma(t)\,\delta_{t,u}.
\end{equation*}
 
For every multi-index $\kk$ with $\wt(\kk)=-1$, the map $\jmathop$ defined by~\eqref{eq:jmath-def} satisfies~\cite{ZGM2026}
\begin{equation}\label{eq:jmath-fibre}
\jmathop(\xx^{\kk})=\sum_{t\in \Fib_\kk}\frac{\sigma(\xx^{\kk})}{\sigma(t)}\,t.
\end{equation}

The following proposition explains why $J_{\kk}$ should be regarded as the total
coefficient mass of the fibre $\mathcal F_{\kk}$ in the tree expansion of $\jmath(\xx^\kk)$.

\begin{proposition}\label{prop:J-meaning}
For every multi-index $\kk$ with $\wt(\kk)=-1$, the total coefficient of the tree expansion~\eqref{eq:jmath-fibre} is exactly $J_\kk$.
Equivalently,
\begin{equation}\label{eq:J-total}
J_\kk=\sum_{t\in \Fib_\kk}[t]\jmathop(\xx^{\kk}),
\end{equation}
where $[t]\jmathop(\xx^{\kk})$ denotes the coefficient of $t$ in the basis expansion of $\jmathop(\xx^{\kk})$.
Consequently,
\begin{equation}\label{eq:J-series-meaning}
J(\uvec)=\sum_{\wt(\kk)=-1}\left(\sum_{t\in \Fib_\kk}[t]\jmathop(\xx^{\kk})\right)\uvec^\kk.
\end{equation}
\end{proposition}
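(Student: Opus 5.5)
The claim follows directly from the explicit description \eqref{eq:jmath-fibre} of $\jmathop$ together with the definition of $J_\kk$ in Definition~\ref{def:counting}. I would first record that $\jmathop(\xx^{\kk})$ is given by \eqref{eq:jmath-fibre} for every $\kk$ with $\wt(\kk)=-1$. I would also note that the trees in $\Fib_\kk$ are pairwise non-isomorphic, since $\Fib_\kk$ is a set of isomorphism classes, and that $\Fib_\kk$ is finite. The sum in \eqref{eq:jmath-fibre} is therefore a finite linear combination of distinct elements of the basis of $\mathcal{V}_{\mathrm{BCK}}^A$ formed by isomorphism classes of decorated rooted trees.

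Because basis expansions are unique, I would read off the coefficients directly. For $t\in\Fib_\kk$,
\[
[t]\jmathop(\xx^{\kk})=\frac{\sigma(\xx^{\kk})}{\sigma(t)},
\]
and $[t]\jmathop(\xx^{\kk})=0$ for every tree $t\notin\Fib_\kk$. As a cross-check independent of \eqref{eq:jmath-fibre}, the same coefficient comes from the defining duality. Pair $\jmathop(\xx^\kk)=\sum_u c_u u$ with a tree $t$. The BCK pairing gives $\sigma(t)c_t$. The LOT pairing gives $\langle\Phi(t),\xx^\kk\rangle_{\mathrm{LOT}}=\sigma(\xx^\kk)\delta_{\kk(t),\kk}$. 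Equating the two yields $c_t=\sigma(\xx^\kk)/\sigma(t)$ when $t\in\Fib_\kk$ and $c_t=0$ otherwise.

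Summing these coefficients over $t\in\Fib_\kk$ gives exactly $\sum_{t\in\Fib_\kk}\sigma(\xx^\kk)/\sigma(t)=J_\kk$, which is \eqref{eq:J-total}. Since the coefficients vanish off the fibre, this also equals the total coefficient of the whole expansion of $\jmathop(\xx^\kk)$. To obtain \eqref{eq:J-series-meaning}, I would substitute \eqref{eq:J-total} termwise into the definition of $J(\uvec)$ as a formal power series; this is an identity of coefficients for each monomial $\uvec^\kk$.

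There is no real obstacle. The only point requiring care is that distinct elements of $\Fib_\kk$ are distinct basis vectors, so that no coefficients combine. This holds because $\Fib_\kk$ is defined on isomorphism classes and $\sigma(t)=|\Aut(t)|$ is an isomorphism invariant.
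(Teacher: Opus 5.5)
Your proposal is correct and is essentially the paper's proof: you read off the coefficients $\sigma(\xx^{\kk})/\sigma(t)$ on $\Fib_\kk$ (and $0$ off it) from \eqref{eq:jmath-fibre}, sum them to get $J_\kk$, and pass coefficientwise to $J(\uvec)$. Your extra cross-check, which rederives these coefficients directly from the defining pairings, is a valid bonus that makes the argument independent of the cited formula \eqref{eq:jmath-fibre}.
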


\begin{proof}
By~\meqref{eq:jmath-fibre}, the coefficient of a tree $t\in \Fib_\kk$ in $\jmathop(\xx^{\kk})$ is $\sigma(\xx^{\kk})/\sigma(t)$, and the coefficient of any tree outside $\Fib_\kk$ is $0$.
Therefore
\begin{equation*}
\sum_{t\in \Fib_\kk}[t]\jmathop(\xx^{\kk}) = \sum_{t\in \Fib_\kk}\frac{\sigma(\xx^{\kk})}{\sigma(t)} = J_\kk
\end{equation*}
by Definition~\ref{def:counting}.
This proves~\eqref{eq:J-total}, and~\eqref{eq:J-series-meaning} is just the generating-series reformulation of the same identity.
\end{proof}

\section{Weighted fibre enumeration}\label{sec:weighted}
The goal of this section is twofold:
first, to prove a closed formula for the weighted fibre cardinality $W_\kk$; second, to derive a root-decomposition recursion and the corresponding functional equation for $T(\uvec)$.
To compute the symmetry-weighted fibre counts, we first pass to labelled rooted trees.

\begin{definition}
Let $I$ be a finite set.
A {\bf labelled $A$-decorated rooted tree} on $I$ is an $A$-decorated rooted tree whose vertex set is exactly $I$.
For any multi-index $\kk$ with $|\kk|=|I|$, define the {\bf labelled fibre} of $\Phi$ over $\xx^{\kk}$
on $I$ to be
\begin{equation*}
\Fib_\kk[I]:=\{\,T:\ V(T)=I,\ \Phi(T)=\xx^{\kk}\,\}.
\end{equation*}
\end{definition}

If $\wt(\kk)\neq -1$, then $\Fib_\kk[I]=\emptyset$ by Remark~\ref{rk:balance}.
If $|I|=|J|$, any bijection $I\to J$ transports labelled trees on $I$ bijectively to labelled trees on $J$,
preserving the profile.
Hence the cardinality of $\Fib_\kk[I]$ depends only on $|I|$.
For $|\kk|=n$, we therefore write
\begin{equation*}
L_\kk:=|\Fib_\kk[I]|
\quad\text{for any finite set } I \text{ with } |I|=n.
\end{equation*}

The following proposition explains why labelled fibres are the correct tool for studying
the symmetry-weighted counts.

\begin{proposition}\label{prop:labelled-to-weighted}
Let $\kk$ be a multi-index with $\wt(\kk)=-1$, and set $n:=|\kk|$.
Then
\begin{equation}\label{eq:Lk-Wk}
L_\kk=n!\,W_\kk.
\end{equation}
\end{proposition}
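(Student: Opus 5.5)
We prove \eqref{eq:Lk-Wk} by an orbit--stabilizer count. Fix a set $I$ with $|I|=n$. The plan is to partition $\Fib_\kk[I]$ according to the isomorphism class of the underlying (unlabelled) decorated rooted tree, and then count each class separately.

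First, the forgetful map $\Fib_\kk[I]\to\Fib_\kk$ is well defined. Forgetting labels sends a labelled tree $T$ to its isomorphism class. The profile is unchanged, because decorations and fertilities are intrinsic to vertices and do not depend on labels. The map is also surjective: any $t\in\Fib_\kk$ has exactly $n=|\kk|$ vertices by Remark~\ref{rk:balance}. Choosing a bijection $V(t)\to I$ therefore produces a labelled tree in the fibre of $t$. Hence
\[
L_\kk=\sum_{t\in\Fib_\kk}\#\{T\in\Fib_\kk[I]:\ T\cong t\}.
\]

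Second, for a fixed $t\in\Fib_\kk$, we count the labelled trees isomorphic to $t$. Let the symmetric group $\mathfrak S_I$ act on the set of labelled $A$-decorated rooted trees on $I$ by relabelling: for $\pi\in\mathfrak S_I$, the tree $\pi\cdot T$ is the transport of $T$ along $\pi$. The labelled trees isomorphic to $t$ form exactly one orbit. Indeed, two labelled trees on $I$ are isomorphic as decorated rooted trees iff some bijection $I\to I$ preserving edges, root and decorations carries one to the other, and such a bijection is precisely a relabelling. The stabilizer of $T$ consists of the permutations of $I$ that preserve the parent relation, the root and the decoration map. These are exactly the automorphisms of $T$, so the stabilizer is isomorphic to $\Aut(t)$. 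By orbit--stabilizer, the orbit has size $n!/|\Aut(t)|=n!/\sigma(t)$.

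Summing over $t\in\Fib_\kk$ gives $L_\kk=n!\sum_{t}1/\sigma(t)=n!\,W_\kk$.

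The only delicate point is identifying the stabilizer with $\Aut(t)$. This requires that automorphisms of decorated rooted trees are taken to preserve the root and the decorations. With the definitions of $\sigma(t)$ and of isomorphism used in Definition~\ref{def:fibre}, this holds by construction, so the argument goes through without further work.
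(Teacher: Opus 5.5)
Your proposal is correct and is essentially the paper's argument. The paper lets $\Aut(T_t)$ act freely by precomposition on the $n!$ bijections $V(T_t)\to I$ and counts the orbits, whereas you let $\mathfrak S_I$ act transitively on the labelled trees of class $t$ and apply orbit--stabilizer with stabilizer $\Aut(t)$; these are the same count seen from the two commuting actions on that set of bijections.
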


\begin{proof}
Let $I$ be a finite set with $|I|=n$.
For each isomorphism class $t\in \Fib_\kk$, choose a representative $T_t$.
Let
\begin{equation*}
\mathrm{Lab}(T_t,I):=\{\lambda:V(T_t)\to I \text{ bijective}\}.
\end{equation*}
Then there is a group action
\[
\Aut(T_t)\times \mathrm{Lab}(T_t,I) \longmapsto \mathrm{Lab}(T_t,I), \quad (\lambda,\alpha)\longmapsto \lambda\circ \alpha,
\]
which is free. Indeed, if $\lambda\circ \alpha=\lambda$, then $\alpha=\mathrm{id}$, as $\lambda$ is bijective.

Two labelings $\lambda,\mu\in \mathrm{Lab}(T_t,I)$ determine the same labelled rooted tree on $I$
if and only if they differ by an automorphism of $T_t$.
Therefore the number of distinct labelled trees on $I$ having underlying isomorphism class $t$ is exactly
\begin{equation*}
\frac{|\mathrm{Lab}(T_t,I)|}{|\Aut(T_t)|}
=
\frac{n!}{\sigma(t)}.
\end{equation*}
Summing over all isomorphism classes $t\in \Fib_\kk$, we obtain
\begin{align}
L_\kk =\sum_{t\in \Fib_\kk}\frac{n!}{\sigma(t)}  =n!\sum_{t\in \Fib_\kk}\frac{1}{\sigma(t)}
 =
n!\,W_\kk
\end{align}
by Definition~\ref{def:counting}. This proves~\eqref{eq:Lk-Wk}.
\end{proof}

We shall also use the following standard labelled-tree enumeration with prescribed
fertilities, which follows from the Pr\"ufer correspondence.

\begin{proposition}\label{prop:prescribed-fertilities}
Let $n\ge 1$, and let $r_1,\dots,r_n\in \mathbb{Z}_{\geq 0}$ satisfy
\begin{equation}\label{eq:fertility-sum}
r_1+\cdots+r_n=n-1.
\end{equation}
Then the number of rooted trees on the vertex set $[n]:=\{1,\dots,n\}$ such that
vertex $i$ has fertility $r_i$ for every $i$ is
\begin{equation}\label{eq:prescribed-fertility-count}
\frac{(n-1)!}{r_1!\cdots r_n!}.
\end{equation}
\end{proposition}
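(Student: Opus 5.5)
We count rooted trees on $[n]$ with prescribed fertilities by a Pr\"ufer-type bijection adapted to rooted trees. Orient every edge towards the root, so each non-root vertex $v$ has a unique parent $p(v)$, and the fertility of $i$ is $r_i=\#\{v: p(v)=i\}$. The root has no parent. Condition~\eqref{eq:fertility-sum} is then automatically consistent with the edge count $n-1$ of any tree, as in Remark~\ref{rk:balance}.

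The key step is to encode a rooted tree $T$ on $[n]$ by a word $(c_1,\dots,c_{n-1})\in[n]^{n-1}$ in which each letter $i$ occurs exactly $r_i$ times. We then show that this encoding gives a bijection onto a suitable set of words. The most convenient variant is the following. Instead of the classical Pr\"ufer code (which removes the smallest leaf and records its neighbour, producing $n-2$ letters for unrooted trees), we repeatedly remove the smallest-labelled leaf $v$ of the current rooted tree, where a leaf means a vertex of fertility $0$ that is not the only remaining vertex. We record $p(v)$ at each removal. After $n-1$ steps only the root remains, and the word records every parent exactly once per child, so letter $i$ appears $r_i$ times.

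Inverting this is the main obstacle, because the word alone does not determine the root. This is why the image is not all of $[n]^{n-1}$: we have $n^{n-1}$ rooted trees but $n^{n-1}$ words, consistent with Cayley's count of rooted labelled trees $n\cdot n^{n-2}$. The clean way out is to note that the last recorded letter $c_{n-1}$ is always the root. With this observation the decoding is standard: at step $m$, the removed vertex is the smallest label not among $\{c_m,\dots,c_{n-1}\}$ and not yet removed, and it is attached to $c_m$. We verify that this inverse is well defined for every word in $[n]^{n-1}$ and recovers $T$. This shows that the encoding is a bijection from rooted trees on $[n]$ to $[n]^{n-1}$ which preserves the letter multiplicities $r_i$.

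Granting the bijection, the count of rooted trees with fertility vector $(r_1,\dots,r_n)$ equals the number of words of length $n-1$ with exactly $r_i$ copies of $i$. This is the multinomial coefficient $(n-1)!/(r_1!\cdots r_n!)$, which gives~\eqref{eq:prescribed-fertility-count}.

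As a sanity check, or as an alternative route if the bijection proves fiddly, sum the multinomial over all $(r_i)$ satisfying~\eqref{eq:fertility-sum}. This gives $n^{n-1}$, the number of rooted labelled trees. We can also argue by induction on $n$: choose a leaf $v$ (a vertex with $r_v=0$; one exists since $\sum r_i=n-1<n$) and delete it from trees whose parent of $v$ is some $j$ with $r_j\ge1$. Summing the inductive counts $(n-2)!\,r_j/\prod r_i!$ over $j$ yields $(n-2)!(n-1)/\prod r_i!$, which is the claim. Subtleties at $n=1$ (the empty word, giving count $1$) are handled directly.
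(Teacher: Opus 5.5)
Your argument is correct, but it takes a different route from the paper.

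The paper proceeds in three steps:
\begin{itemize}
\item it fixes the root $q$ (necessarily $r_q\ge 1$ when $n\ge 2$);
\item it forgets the orientation to get an unrooted tree with degree sequence $d_i=r_i+1$ for $i\neq q$ and $d_q=r_q$, and quotes the classical unrooted Pr\"ufer count $(n-2)!/\prod_i(d_i-1)!$;
\item it sums $(n-2)!\,r_q/\prod_i r_i!$ over $q$, using $\sum_q r_q=n-1$.
\end{itemize}

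You instead use a rooted Pr\"ufer code of length $n-1$ whose last letter is the root. This puts rooted trees with fertility vector $(r_i)$ in direct bijection with words of content $(r_i)$. The multinomial then appears with no summation over roots, and Cayley's $n^{n-1}$ for rooted trees comes for free. The cost is that the decoding must be checked rather than cited:
\begin{itemize}
\item at step $m$ at most $(m-1)+(n-m)=n-1$ labels are excluded, so $v_m$ exists;
\item the rule $v_i\notin\{c_i,\dots,c_{n-1}\}$ for all $i$ forces $v_m\neq c_{n-1}$ and $c_m\notin\{v_1,\dots,v_m\}$;
\item hence $c_m$ is either the root $c_{n-1}$ or some $v_{m'}$ with $m'>m$, so following parents strictly increases the removal index and yields a rooted tree with root $c_{n-1}$.
\end{itemize}

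Your inductive alternative is the most elementary of the three and is essentially complete. You should only add that for $n\ge 2$ the vertex $v$ with $r_v=0$ cannot be the root, so it has a parent $j\neq v$, and reattaching $v$ to $j$ inverts the deletion. It replaces the paper's sum over the root by a sum over the parent of a fixed leaf, with the same identity $\sum_j r_j=n-1$ doing the work.

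Finally, delete the sentences claiming that ``the word alone does not determine the root'' and that ``the image is not all of $[n]^{n-1}$''. Both are false and contradict your own conclusion, since the last letter is the root and the encoding is onto $[n]^{n-1}$.
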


\begin{proof}
We first treat the case $n=1$.
Then~\eqref{eq:fertility-sum} forces $r_1=0$, and there is exactly one rooted tree on $[1]$.
Equation~\eqref{eq:prescribed-fertility-count} gives $0!/0!=1$, so the statement holds.

Assume now $n\ge 2$.
For any rooted tree on $[n]$, let $q$ denote its root.
Since $n\ge 2$, the root has at least one child, hence
$r_q\ge 1.$
Fix such a label $q$ with $r_q\ge 1$, and consider rooted trees on $[n]$ with root $q$
and fertility sequence $(r_1,\dots,r_n)$.
Forget the orientation and the root, and look at the underlying undirected tree.
Its degree sequence is then
\begin{equation}\label{eq:degree-sequence-root-q}
d_i=
\begin{cases}
r_i+1, & i\neq q,\\
r_q, & i=q.
\end{cases}
\end{equation}
Indeed, every non-root vertex has one edge to its parent and $r_i$ edges to its children,
whereas the root has no parent and only its $r_q$ child-edges.

Conversely, any undirected tree on $[n]$ with degree sequence~\eqref{eq:degree-sequence-root-q}
becomes uniquely a rooted tree with root $q$ and fertilities $(r_1,\dots,r_n)$
after orienting every edge towards $q$.
Thus the number of rooted trees with root $q$ and prescribed fertilities equals the number of
undirected labelled trees with degree sequence~\eqref{eq:degree-sequence-root-q}.

By the classical Pr\"ufer correspondence~\cite{Moon1970, Prufer1918}, labelled trees on $[n]$ are in bijection with sequences
of length $n-2$ over $[n]$, and vertex $i$ appears exactly $d_i-1$ times in the Pr\"ufer sequence.
Hence the number of undirected labelled trees with degree sequence $(d_1,\dots,d_n)$ is
\begin{equation}\label{eq:prufer-multinomial}
\frac{(n-2)!}{(d_1-1)!\cdots(d_n-1)!}.
\end{equation}
Applying~\eqref{eq:prufer-multinomial} to~\eqref{eq:degree-sequence-root-q}, we find that
the number of rooted trees with root $q$ and prescribed fertilities is
\begin{equation}\label{eq:count-fixed-root}
\frac{(n-2)!}{r_1!\cdots r_{q-1}!\,(r_q-1)!\,r_{q+1}!\cdots r_n!}.
\end{equation}
Now sum~\eqref{eq:count-fixed-root} over all possible roots $q$ satisfying $r_q\ge 1$.
The total number of rooted trees with fertilities $(r_1,\dots,r_n)$ is therefore
\begin{align*}
\sum_{q:\,r_q\ge 1}
\frac{(n-2)!}{r_1!\cdots r_{q-1}!\,(r_q-1)!\,r_{q+1}!\cdots r_n!}
&=
(n-2)!\sum_{q:\,r_q\ge 1}\frac{r_q}{r_1!\cdots r_n!}
\notag\\
&=
\frac{(n-2)!}{r_1!\cdots r_n!}\sum_{q=1}^n r_q
\notag\\
&=
\frac{(n-2)!}{r_1!\cdots r_n!}(n-1),
\end{align*}
where the last equality follows from~\eqref{eq:fertility-sum}. This is exactly~\eqref{eq:prescribed-fertility-count}.
\end{proof}

We now present the explicit formula for labelled and weighted fibres.
\begin{theorem}\label{thm:weighted-formula}
Let $\kk$ be a multi-index with $\wt(\kk)=-1$, and set $n:=|\kk|$.
Then
\begin{equation}\label{eq:Lk-explicit}
L_\kk=
\frac{n!(n-1)!}
{\displaystyle\prod_{a\in A}\prod_{j\ge -1} k_j^a!\,(j+1)!^{k_j^a}},
\end{equation}
and therefore
\begin{equation}\label{eq:Wk-explicit}
W_\kk=
\frac{(n-1)!}
{\displaystyle\prod_{a\in A}\prod_{j\ge -1} k_j^a!\,(j+1)!^{k_j^a}}.
\end{equation}
\end{theorem}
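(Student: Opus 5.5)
We fix the vertex set $I=[n]$ and count the labelled fibre $\Fib_\kk[[n]]$ by splitting the choice of a labelled tree into two independent pieces of data. The first piece is the assignment of decorations and fertilities to the labels. The second piece is the tree structure compatible with those fertilities. Once $L_\kk$ is known, the formula \eqref{eq:Wk-explicit} for $W_\kk$ follows at once from Proposition~\ref{prop:labelled-to-weighted}, by dividing by $n!$.

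\emph{Step 1: decorated fertility assignments.} A labelled tree $T\in\Fib_\kk[[n]]$ determines a map $[n]\to I_A$, $i\mapsto (d(i),f(i)-1)$, whose fibre over $(a,j)$ has exactly $k_j^a$ elements. Conversely, we take any such map, that is, any ordered set partition of $[n]$ into blocks of sizes $k_j^a$ indexed by $(a,j)\in I_A$. The number of these maps is the multinomial coefficient
\[
\frac{n!}{\prod_{a\in A}\prod_{j\ge -1}k_j^a!},
\]
which is finite because $\kk$ has finite support and $|\kk|=n$.

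\emph{Step 2: trees with prescribed fertilities.} We fix such an assignment. This fixes the decoration of each vertex and a fertility sequence $(r_1,\dots,r_n)$ in which $r_i=j+1$ whenever $i$ lies in the $(a,j)$-block. By Remark~\ref{rk:balance}, or directly from $\wt(\kk)=-1$ and $|\kk|=n$, we have
\[
\sum_i r_i=\sum_{a,j}(j+1)k_j^a=\wt(\kk)+|\kk|=n-1.
\]
So Proposition~\ref{prop:prescribed-fertilities} applies. It says that the number of rooted trees on $[n]$ with these fertilities is
\[
\frac{(n-1)!}{\prod_i r_i!}=\frac{(n-1)!}{\prod_{a,j}(j+1)!^{k_j^a}}.
\]
Each such rooted tree, carrying the fixed decorations, lies in $\Fib_\kk[[n]]$. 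Distinct pairs (assignment, tree) give distinct labelled decorated trees, and every element of the fibre arises this way, with its assignment read off from it.

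\emph{Step 3: conclusion.} Multiplying the two counts gives \eqref{eq:Lk-explicit}. Dividing by $n!$ and applying \eqref{eq:Lk-Wk} gives \eqref{eq:Wk-explicit}.

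The only point needing care is the bijectivity in Step 2. The assignment must be recoverable from the labelled tree, which holds because decoration and fertility are both intrinsic to the tree. The count of trees must also depend only on the multiset of the $r_i$, which is immediate from Proposition~\ref{prop:prescribed-fertilities}. The factors with $j=-1$ contribute $0!=1$, and the finitely many nonzero $k_j^a$ make the products finite. No step here seems to be a genuine obstacle.
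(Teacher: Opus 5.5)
Your proposal is correct and follows essentially the same route as the paper. The paper also partitions the labelled fibre by the type assignment $[n]\to I_A$, counted by the multinomial coefficient. For each fixed assignment it counts the compatible rooted trees via Proposition~\ref{prop:prescribed-fertilities}, after checking that the fertilities sum to $n-1$, then multiplies the two counts and divides by $n!$ using Proposition~\ref{prop:labelled-to-weighted}.
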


\begin{proof}
We count labelled trees on the label set $[n]$.
Let $\Theta_\kk$ be the set of maps
$$
\theta:[n]\to I_A
$$
such that, for every pair $(a,j)$,
exactly $k_j^a$ labels $i\in [n]$ satisfy $\theta(i)=(a,j)$.
Equivalently, $\theta$ assigns to each label a decoration--fertility type,
with multiplicities prescribed by $\kk$.
The number of such assignments is the multinomial coefficient
\begin{equation}\label{eq:Theta-k-cardinality}
|\Theta_\kk|
=
\frac{n!}
{\displaystyle\prod_{a\in A}\prod_{j\ge -1} k_j^a!}.
\end{equation}
Fix $\theta\in \Theta_\kk$ and write
\begin{equation}\label{eq:theta-components}
\theta(i)=(a_i,j_i),
\quad
r_i:=j_i+1
\quad
\text{for } i=1,\dots,n.
\end{equation}
Then $r_i\in \mathbb{Z}_{\geq 0}$ for every $i$.
Moreover,
\begin{align}
\sum_{i=1}^n r_i=\sum_{i=1}^n (j_i+1) =\sum_{a\in A}\sum_{j\ge -1}(j+1)k_j^a =|\kk| + \wt(\kk)=|\kk|-1
\label{eq:sum-ri}
\end{align}
Since $|\kk|=n$,~\eqref{eq:sum-ri} becomes
$
r_1+\cdots+r_n=n-1.
$

For the fixed type assignment $\theta$, a labelled decorated rooted tree on $[n]$
has profile $\kk$ and induces $\theta$ if and only if
the vertex $i$ has decoration $a_i$ and fertility $r_i$ for every $i$.
The decorations are already prescribed by $\theta$,
so the only remaining choice is the rooted tree structure with fertilities $(r_1,\dots,r_n)$.
By Proposition~\ref{prop:prescribed-fertilities}, the number of such rooted trees is
\begin{equation*}
\frac{(n-1)!}{r_1!\cdots r_n!}.
\end{equation*}
Using~\eqref{eq:theta-components}, we rewrite the denominator as
\begin{equation*}
r_1!\cdots r_n!
=
\prod_{a\in A}\prod_{j\ge -1}(j+1)!^{k_j^a}.
\end{equation*}
Hence the number of labelled decorated rooted trees compatible with the fixed $\theta$ is
\begin{equation}\label{eq:count-fixed-theta-2}
\frac{(n-1)!}
{\displaystyle\prod_{a\in A}\prod_{j\ge -1}(j+1)!^{k_j^a}}.
\end{equation}

Finally, every labelled decorated rooted tree of profile $\kk$ determines a unique $\theta\in\Theta_\kk$,
namely the map sending label $i$ to the pair $$(d(i), f(i)-1) ,$$
where $d(i)$ and $f(i)$ are the decoration and the fertility of the vertex $i$, respectively.
Therefore the families counted in~\eqref{eq:count-fixed-theta-2}, as $\theta$ ranges over $\Theta_\kk$,
form a partition of $\Fib_\kk[\,[n]\,]$.
Combining~\eqref{eq:Theta-k-cardinality} and~\eqref{eq:count-fixed-theta-2}, we obtain
\begin{align*}
L_\kk
&=
|\Theta_\kk|
\cdot
\frac{(n-1)!}
{\displaystyle\prod_{a\in A}\prod_{j\ge -1}(j+1)!^{k_j^a}}
\notag\\
&=
\frac{n!}
{\displaystyle\prod_{a\in A}\prod_{j\ge -1} k_j^a!}
\cdot
\frac{(n-1)!}
{\displaystyle\prod_{a\in A}\prod_{j\ge -1}(j+1)!^{k_j^a}}
\notag\\
&=
\frac{n!(n-1)!}
{\displaystyle\prod_{a\in A}\prod_{j\ge -1} k_j^a!\,(j+1)!^{k_j^a}},
\end{align*}
which proves~\eqref{eq:Lk-explicit}.
Dividing by $n!$ and using Proposition~\ref{prop:labelled-to-weighted} yields~\eqref{eq:Wk-explicit}.
\end{proof}

From~\eqref{eq:Wk-explicit}, the explicit formula for the coefficient mass follows directly.

\begin{corollary}
Let $\kk$ be a multi-index with $\wt(\kk)=-1$, and set $n:=|\kk|$.
Then
\begin{equation}\label{eq:Jk-explicit}
J_\kk=
\frac{(n-1)!}
{\displaystyle\prod_{a\in A}\prod_{j\ge -1}(j+1)!^{k_j^a}}.
\end{equation}
\end{corollary}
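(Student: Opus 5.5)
The corollary follows by multiplying the formula for $W_\kk$ by a constant that does not depend on the tree. By Definition~\ref{def:counting}, the summand $\sigma(\xx^{\kk})/\sigma(t)$ has a numerator that depends only on $\kk$. So the first step is to factor it out of the sum:
\[
J_\kk=\sigma(\xx^{\kk})\sum_{t\in\Fib_\kk}\frac{1}{\sigma(t)}=\sigma(\xx^{\kk})\,W_\kk .
\]

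The second step substitutes the closed form \eqref{eq:Wk-explicit} from Theorem~\ref{thm:weighted-formula} together with the definition \eqref{eq:sigma-monomial}, which gives $\sigma(\xx^{\kk})=\prod_{a\in A}\prod_{j\ge -1}k_j^a!$. The denominator of \eqref{eq:Wk-explicit} is the product over $(a,j)$ of $k_j^a!\,(j+1)!^{k_j^a}$. The factorials $k_j^a!$ therefore cancel against $\sigma(\xx^{\kk})$ term by term, and what remains is $(n-1)!\big/\prod_{a,j}(j+1)!^{k_j^a}$. This is exactly \eqref{eq:Jk-explicit}.

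There is no real obstacle here. The only point to check is that both products range over the same finite index set, so the cancellation is legitimate. This holds because $\kk$ has finite support, and every factor with $k_j^a=0$ equals $1$ in both products.

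As a sanity check, the same result can be read off from the labelled count. By Proposition~\ref{prop:labelled-to-weighted}, $J_\kk=\sigma(\xx^\kk)L_\kk/n!$. Substituting \eqref{eq:Lk-explicit}, this equals $\sigma(\xx^{\kk})/n!$ times the multinomial $|\Theta_\kk|$ times the fixed-$\theta$ count \eqref{eq:count-fixed-theta-2}. Since $|\Theta_\kk|=n!/\sigma(\xx^{\kk})$, the first two factors cancel, and $J_\kk$ is precisely the fixed-$\theta$ count, as expected.
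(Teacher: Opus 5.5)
Your proposal is correct and follows the same route as the paper: factor $\sigma(\xx^{\kk})$ out of the sum to get $J_\kk=\sigma(\xx^{\kk})\,W_\kk$, then substitute \eqref{eq:Wk-explicit} and cancel the factorials $k_j^a!$. Your additional sanity check, which identifies $J_\kk$ with the fixed-$\theta$ count \eqref{eq:count-fixed-theta-2}, is also correct; the paper does not include it, but it gives a direct combinatorial reading of the formula.
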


\begin{proof}
By Definition~\ref{def:counting},
\begin{equation*}
J_\kk=\sigma(\xx^{\kk})\,W_\kk.
\end{equation*}
Using~\eqref{eq:sigma-monomial} and~\eqref{eq:Wk-explicit},
\begin{equation*}
J_\kk = \left(\prod_{a\in A}\prod_{j\ge -1} k_j^a!\right)
\cdot \frac{(n-1)!} {\displaystyle\prod_{a\in A}\prod_{j\ge -1} k_j^a!\,(j+1)!^{k_j^a}} = \frac{(n-1)!}{\displaystyle\prod_{a\in A}\prod_{j\ge -1}(j+1)!^{k_j^a}},
\end{equation*}
which is~\eqref{eq:Jk-explicit}.
\end{proof}

As a consequence,  we immediately obtain explicit coefficient generating series.

\begin{corollary}
The generating series of Definition~\ref{def:counting} admit the coefficientwise expansions
\begin{equation*}
T(\uvec)
=
\sum_{\wt(\kk)=-1}
\frac{(|\kk|-1)!}
{\displaystyle\prod_{a\in A}\prod_{j\ge -1} k_j^a!\,(j+1)!^{k_j^a}}
\,\uvec^\kk,
\end{equation*}
and
\begin{equation*}
J(\uvec)
=
\sum_{\wt(\kk)=-1}
\frac{(|\kk|-1)!}
{\displaystyle\prod_{a\in A}\prod_{j\ge -1}(j+1)!^{k_j^a}}
\,\uvec^\kk.
\end{equation*}
\end{corollary}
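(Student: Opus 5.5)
This corollary is a direct substitution of the closed formulas already proved into the series of Definition~\ref{def:counting}. I would start from the definitions
\[
T(\uvec)=\sum_{\wt(\kk)=-1} W_\kk\,\uvec^\kk,
\qquad
J(\uvec)=\sum_{\wt(\kk)=-1} J_\kk\,\uvec^\kk .
\]
These are formal power series in the commuting variables $u_{a,j}$, indexed by finitely supported multi-indices. Equality of two such series therefore means equality of the coefficient of each $\uvec^\kk$. This is what "coefficientwise expansion" means, so no convergence question arises.

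The key steps, in order, are as follows.

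\textbf{Step 1.} Fix $\kk$ with $\wt(\kk)=-1$ and set $n=|\kk|$. Then \eqref{eq:Wk-explicit} of Theorem~\ref{thm:weighted-formula} gives $W_\kk$ directly. I will write $n-1$ as $|\kk|-1$, so that the coefficient is expressed purely in terms of $\kk$.

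\textbf{Step 2.} Likewise, \eqref{eq:Jk-explicit} of the preceding corollary gives $J_\kk$ in closed form.

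\textbf{Step 3.} Substitute these expressions into the two sums.

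One point needs a short check: the indexing sets must match. The sums in the statement range over all $\kk$ with $\wt(\kk)=-1$, including those with $|\kk|=0$. However, $\wt(\kk)=-1$ forces $k_{-1}^a\ge 1$ for some $a$, hence $|\kk|\ge 1$. So $(|\kk|-1)!$ is always defined.

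I should also note that the formulas were proved for every $\kk$ of weight $-1$, including cases in which the fibre is empty. An empty fibre would make $W_\kk=0$ while the formula gives a positive value. But a weight $-1$ profile always admits a tree: Proposition~\ref{prop:prescribed-fertilities} produces one whenever $\sum r_i=n-1$. Hence the theorem as stated already covers every index in the sum, and no case split is needed.

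There is no real obstacle here. The only thing to verify is this consistency of indexing together with the identification $n=|\kk|$. The proof is therefore a two-line substitution citing Theorem~\ref{thm:weighted-formula} and the corollary for $J_\kk$.
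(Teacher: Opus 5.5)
Your proposal is correct and follows the paper's proof: the corollary is obtained by inserting \eqref{eq:Wk-explicit} and \eqref{eq:Jk-explicit} into the definitions of $T(\uvec)$ and $J(\uvec)$, one coefficient at a time. Your side checks are sound but not needed for the argument: that $\wt(\kk)=-1$ forces $|\kk|\ge 1$, and that the formulas hold for every weight $-1$ profile whether or not the fibre is empty.
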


\begin{proof}
It follows from inserting~\eqref{eq:Wk-explicit} and~\eqref{eq:Jk-explicit} into the definitions of
$T(\uvec)$ and $J(\uvec)$ in Definition~\ref{def:counting}.
\end{proof}

For every $(a,j)\in I_A$, let $\ee_j^a$ denote the multi-index
whose $(a,j)$-coordinate is $1$ and whose other coordinates are $0$. We now give a root-decomposition recursion for $W_\kk$.

\begin{proposition}\label{prop:weighted-recursion}
Let $\kk$ be a multi-index with $\wt(\kk)=-1$.
Then
\begin{equation}\label{eq:weighted-recursion}
W_\kk
=
\sum_{a\in A}\sum_{\substack{j\ge -1\\ k_j^a\ge 1}}
\frac{1}{(j+1)!}
\sum_{\substack{\kk^{1}+\cdots+\kk^{j+1}=\kk-\ee_j^a\\ \wt(\kk^{m})=-1,\ \forall m \in [j+1]}}
W_{\kk^{1}}\cdots W_{\kk^{j+1}}.
\end{equation}
\end{proposition}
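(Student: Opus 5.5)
The plan is to prove the recursion at the level of labelled trees and then divide by factorials via Proposition~\ref{prop:labelled-to-weighted}. Multiplying both sides by $n!$, where $n=|\kk|$, the claim becomes
\[
L_\kk=\sum_{a,j:\,k_j^a\ge1}\frac{n!}{(j+1)!}\sum_{\kk^1+\cdots+\kk^{j+1}=\kk-\ee_j^a}\ \prod_{m=1}^{j+1}\frac{L_{\kk^m}}{n_m!},
\qquad n_m:=|\kk^m|.
\]
Here the inner sum runs over \emph{ordered} tuples, and $n_1+\cdots+n_{j+1}=n-1$. Proposition~\ref{prop:labelled-to-weighted} applies to each factor because $\wt(\kk^m)=-1$ is imposed.

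The first step is to count labelled trees on $[n]$ of profile $\kk$ by the root type. The root has some decoration $a$ and fertility $j+1$, so $k_j^a\ge1$. The root has $n$ possible labels, but it must be chosen compatibly with the types. To avoid that complication, I will decompose as follows.

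\begin{enumerate}
\item Choose the root label: $n$ ways.
\item Partition the remaining $n-1$ labels into an \emph{unordered} set of $j+1$ nonempty blocks.
\item Place on each block a labelled rooted tree.
\item Require that the total profile be $\kk-\ee_j^a$.
\end{enumerate}

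Removing the root of a labelled tree gives exactly such an unordered set of branches, and each branch's profile has weight $-1$ by Remark~\ref{rk:balance}. Conversely, attaching the branches to a root of type $(a,j)$ recovers the tree, so this is a bijection.

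The second step handles the unordered collection. The branches live on disjoint label sets, so they are pairwise distinct. Ordering them therefore gives exactly $(j+1)!$ ordered tuples for each unordered collection. In particular, no symmetry correction is needed in the labelled world, which is the reason for passing to labels. Counting the ordered tuples, I choose profiles $\kk^1,\dots,\kk^{j+1}$ summing to $\kk-\ee_j^a$, choose an ordered set partition of the $n-1$ labels with block sizes $n_m$, and choose a tree on each block. This gives
\[
\frac{(n-1)!}{\prod_m n_m!}\prod_m L_{\kk^m}.
\]

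The remaining concern is the root-label choice in the first step. In the bijection above, the root's label is free among the $n$ labels, and the remaining labels go to the branches. This contributes the factor $n\cdot(n-1)!=n!$, and the count matches the displayed identity exactly. Dividing by $n!$ and converting each $L_{\kk^m}/n_m!=W_{\kk^m}$ yields \eqref{eq:weighted-recursion}.

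The main obstacle is bookkeeping. One must make sure the root's type is determined consistently: its decoration $a$ is a free choice, and its fertility is forced to be $j+1$. One must also check that summing over $(a,j)$ with $k_j^a\ge1$ neither double counts nor misses trees. I would verify this by the uniqueness of the root in each labelled tree.

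As a sanity check, I would compare the result against the closed formula \eqref{eq:Wk-explicit} on small cases, such as $\kk=\ee_{-1}^a$ and a single edge.
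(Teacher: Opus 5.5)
Your proposal is correct and follows essentially the same argument as the paper. You fix the root type $(a,j)$ and count ordered constructions as $n\cdot\frac{(n-1)!}{n_1!\cdots n_{j+1}!}\prod_m L_{\kk^m}$, then divide by $(j+1)!$ because branches on disjoint label sets are pairwise distinct, and finally convert via $L_\kk=n!\,W_\kk$. Your intermediate worry that the root label must be chosen ``compatibly with the types'' is unfounded: no types are pre-assigned to labels in $\Fib_\kk[I]$, so all $n$ root labels are genuinely free, which is what you end up using.
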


\begin{proof}
Set $n:=|\kk|$, and fix a label set $I$ with $|I|=n$.
We count the labelled fibre $\Fib_\kk[I]$.

Take any $T\in \Fib_\kk[I]$, and let $r\in I$ be its root.
Suppose that the root has decoration $a$ and fertility $j+1$.
Then the root contributes exactly the profile $\ee_j^a$.
After removing the root, the remaining graph splits into exactly $j+1$ connected components,
namely the rooted subtrees hanging from the children of the root.
Let their profiles be $\kk^{1},\dots,\kk^{j+1}$.
Each $\kk^{m}$ has weight $-1$ by Remark~\ref{rk:balance},
and the profile additivity gives
\begin{equation}\label{eq:profile-root-split}
\kk=\ee_j^a+\kk^{1}+\cdots+\kk^{j+1}.
\end{equation}

Conversely, suppose that we are given:
\begin{enumerate}
    \item a root label $r\in I$; \label{it:rl}
    \item a pair $(a,j)$ with $k_j^a\ge 1$;
    \item an ordered $(j+1)$-tuple $(\kk^{1},\dots,\kk^{j+1})$ of multi-indices of weight $-1$
    satisfying~\eqref{eq:profile-root-split};
    \item an ordered partition
    \begin{equation*}
    I\setminus\{r\}=I_1\sqcup\cdots\sqcup I_{j+1}
    \end{equation*}
    such that $|I_m|=|\kk^{m}|$ for every $m$; \label{it:orp}
    \item for each $m$, a labelled decorated rooted tree $T_m\in \Fib_{\kk^{m}}[I_m]$. \label{it:ldsrt}
\end{enumerate}
Attach the roots of $T_1,\dots,T_{j+1}$ to a new root labelled $r$ and decorated by $a$.
The resulting rooted tree has profile $\kk$.
Hence the above data reconstruct a labelled tree in $\Fib_\kk[I]$.

Now fix $(a,j)$ and an ordered tuple $(\kk^{1},\dots,\kk^{j+1})$ as in
\eqref{eq:profile-root-split}, and set
\begin{equation*}
n_m:=|\kk^{m}|
\quad
\text{for } m=1,\dots,j+1.
\end{equation*}
The number of admissible choices in items (\ref{it:rl}), (\ref{it:orp}), and (\ref{it:ldsrt}) above is:
\begin{itemize}
    \item $n$ choices for the root label $r$;
    \item
    \begin{equation*}
    \frac{(n-1)!}{n_1!\cdots n_{j+1}!}
    \end{equation*}
    ordered partitions of $I\setminus\{r\}$ into blocks of sizes $n_1,\dots,n_{j+1}$;
    \item
    \begin{equation*}
    L_{\kk^{1}}\cdots L_{\kk^{j+1}}
    \end{equation*}
    choices of labelled trees on those blocks.
\end{itemize}
Therefore the number of resulting ordered constructions is
\begin{equation}\label{eq:ordered-construction-count}
n\cdot
\frac{(n-1)!}{n_1!\cdots n_{j+1}!}
\cdot
L_{\kk^{1}}\cdots L_{\kk^{j+1}}.
\end{equation}

However, the children of the root are intrinsically unordered.
A given labelled rooted tree in $\Fib_\kk[I]$ with root fertility $j+1$
arises from exactly $(j+1)!$ ordered listings of its $j+1$ child-subtrees.
Indeed, the $j+1$ connected components obtained after removing the root have pairwise disjoint vertex sets,
so every permutation of them gives a distinct ordered tuple in the construction above,
and no other ordered tuple yields the same rooted tree.
Hence we must divide~\eqref{eq:ordered-construction-count} by $(j+1)!$.

Summing over all admissible $(a,j)$ and all admissible ordered tuples
$(\kk^{1},\dots,\kk^{j+1})$, we obtain
\begin{align}
L_\kk
&=
\sum_{a\in A}\sum_{\substack{j\ge -1\\ k_j^a\ge 1}}
\frac{n}{(j+1)!}
\sum_{\substack{\kk^{1}+\cdots+\kk^{j+1}=\kk-\ee_j^a\\ \wt(\kk^{m})=-1,\ \forall m \in [j+1]}}
\frac{(n-1)!}{n_1!\cdots n_{j+1}!}
L_{\kk^{1}}\cdots L_{\kk^{j+1}}.
\label{eq:Lk-recursion}
\end{align}
Now use Proposition~\ref{prop:labelled-to-weighted}, namely
\begin{equation}\label{eq:Lk-to-Wk-each}
L_{\kk^{m}}=n_m!\,W_{\kk^{m}},
\quad
L_\kk=n!\,W_\kk.
\end{equation}
Substituting~\eqref{eq:Lk-to-Wk-each} into~\eqref{eq:Lk-recursion} and dividing by $n!$
gives exactly~\eqref{eq:weighted-recursion}.
\end{proof}

The root-decomposition recursion of Proposition~\ref{prop:weighted-recursion} can be
encoded equivalently as a functional equation for the weighted generating series $T(\mathbf u)$.

\begin{theorem}\label{thm:weighted-functional}
The formal series $T(\uvec)$ is the unique formal power series with zero constant term such that
\begin{equation}\label{eq:weighted-functional}
T(\uvec)
=
\sum_{a\in A}\sum_{j\ge -1}
u_{a,j}\,
\frac{T(\uvec)^{j+1}}{(j+1)!}.
\end{equation}
\end{theorem}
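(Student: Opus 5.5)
The proof will read the functional equation~\eqref{eq:weighted-functional} off the recursion of Proposition~\ref{prop:weighted-recursion} by comparing coefficients. Then it will show uniqueness by induction on the degree $|\kk|$. Throughout, formal power series in $\uvec$ are graded by total degree $|\kk|$. This is legitimate because $A$ is finite: a product $u_{a,j}T^{j+1}$ has degree at least $j+2$, so for each fixed monomial $\uvec^\kk$ only the finitely many pairs $(a,j)$ with $k^a_j\ge1$ contribute. Hence the infinite sum over $j\ge-1$ on the right-hand side is a well-defined formal power series.

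\emph{Existence.} First note that $T$ has zero constant term, since $\wt(\mathbf 0)=0\neq-1$. Next, fix any $\kk$ and compute the coefficient of $\uvec^\kk$ in $u_{a,j}T^{j+1}/(j+1)!$. Expanding the $(j+1)$-fold product,
\[
[\uvec^\kk]\,u_{a,j}\frac{T^{j+1}}{(j+1)!}=\frac{1}{(j+1)!}\sum_{\substack{\kk^1+\cdots+\kk^{j+1}=\kk-\ee^a_j\\ \wt(\kk^m)=-1}}W_{\kk^1}\cdots W_{\kk^{j+1}},
\]
and this vanishes unless $k^a_j\ge1$. The weight constraint appears automatically, because only monomials of weight $-1$ occur in $T$.

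If $\wt(\kk)=-1$, summing over $(a,j)$ gives exactly the right-hand side of~\eqref{eq:weighted-recursion}, hence $W_\kk$. If $\wt(\kk)\neq-1$, every term vanishes: weight is additive, so $\wt(\ee^a_j+\sum\kk^m)=j-(j+1)=-1$. Thus both sides of~\eqref{eq:weighted-functional} have the same coefficients. The case $j=-1$ deserves a check: it contributes $u_{a,-1}$ (since $T^0=1$), which matches $W_{\ee^a_{-1}}=1$ for the single-vertex tree.

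\emph{Uniqueness.} Let $S$ be any series with zero constant term satisfying~\eqref{eq:weighted-functional}. The coefficient of $\uvec^\kk$ on the right involves only coefficients of $S$ at multi-indices $\kk^m$ with $\sum_m\kk^m=\kk-\ee^a_j$. Since $S$ has no constant term, each such $\kk^m$ is nonzero and has $|\kk^m|\le|\kk|-1$. Therefore the coefficients of $S$ are determined recursively by strong induction on $|\kk|$, starting from the degree-one coefficients $[\uvec^{\ee^a_j}]S=\delta_{j,-1}$. Since $T$ satisfies the same recursion, $S=T$.

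The only delicate point I anticipate is the bookkeeping in the existence step. The left side is indexed only by weight $-1$, while the functional equation is an identity for all $\kk$, so the vanishing of coefficients at weight $\neq-1$ must be verified explicitly, which the additivity argument handles. One must also make sure the hypothesis of zero constant term is what excludes the degenerate fixed points in the uniqueness argument. Without it, for instance, a constant term in $S$ would feed into $u_{a,j}S^{j+1}$ at the same degree and break the induction.
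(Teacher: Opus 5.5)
Your proposal is correct and follows essentially the same route as the paper. You grade by total degree to make the right-hand side well defined, and compare coefficients with Proposition~\ref{prop:weighted-recursion}, using additivity of weight to dispose of $\wt(\kk)\neq -1$. Uniqueness then follows by strong induction on $|\kk|$, since each factor $\kk^m$ is nonzero of degree at most $|\kk|-1$. Your explicit observations that $T$ has zero constant term because $\wt(\mathbf 0)\neq -1$, and the check of the $j=-1$ term, are small points the paper leaves implicit.
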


\begin{proof}
We endow each variable $u_{a,j}$ with weight $j$ and total degree $1$, so that the monomial $\uvec^\kk$ has weight $\wt(\kk)$
and total degree $|\kk|$. We first prove that the right-hand side of~\eqref{eq:weighted-functional} is a well-defined formal series.
Since $T(\uvec)$ has no constant term, $T(\uvec)^{j+1}$ has total degree at least $j+1$.
Therefore, for any fixed monomial $\uvec^\kk$ with total degree $|\kk|$, only finitely many pairs $(a,j)$ and finitely many decompositions
of $\kk-\ee_j^a$ can contribute to its coefficient.

Let us now compute the coefficient of $\uvec^\kk$ in the right-hand side of~\eqref{eq:weighted-functional}.
If $\wt(\kk)\neq -1$, then this coefficient is $0$, because every monomial occurring in
$u_{a,j}T(\uvec)^{j+1}$ has weight
\begin{equation*}
j+(j+1)(-1)=-1.
\end{equation*}
Assume therefore that $\wt(\kk)=-1$.
By the Cauchy product formula, the coefficient of $\uvec^\kk$ in the right-hand side is
\begin{equation*}
\sum_{a\in A}\sum_{\substack{j\ge -1\\ k_j^a\ge 1}}
\frac{1}{(j+1)!}
\sum_{\substack{\kk^{1}+\cdots+\kk^{j+1}=\kk-\ee_j^a\\ \wt(\kk^{m})=-1,\ \forall m \in [j+1]}}
W_{\kk^{1}}\cdots W_{\kk^{j+1}}.
\end{equation*}
By Proposition~\ref{prop:weighted-recursion}, this is exactly $W_\kk$,
which is the coefficient of $\uvec^\kk$ in $T(\uvec)$.
Hence~\eqref{eq:weighted-functional} holds coefficientwise.

It remains to prove uniqueness.
Let $S(\uvec)$ be another formal series with zero constant term satisfying~\eqref{eq:weighted-functional}.
We show by induction on the total degree that the coefficient of every monomial in $S(\uvec)$
is uniquely determined.
For total degree $1$, the only contributing terms on the right-hand side come from $j=-1$,
because $S(\uvec)^{j+1}$ has degree at least $j+1$.
Hence the degree-$1$ part is forced to be
\begin{equation*}
\sum_{a\in A} u_{a,-1}.
\end{equation*}

Assume now that all coefficients of total degree $<n$ are uniquely determined.
To obtain a monomial of total degree $n$ from the right-hand side of~\eqref{eq:weighted-functional},
the term with index $j=-1$ contributes only to degree $1$ and is therefore irrelevant when $n\ge 2$.
For $j \ge 0$, every contributing monomial in $S(\uvec)^{j+1}$ that can contribute to the degree-$n$
coefficient of the right-hand side is a product of $j+1$ monomials of positive degree whose
total degree is $n-1$; hence each factor has degree at most $n-1$, and therefore strictly
smaller than $n$.
Therefore the degree-$n$ coefficient of the right-hand side depends only on coefficients of $S(\uvec)$
of degrees strictly smaller than $n$, which are already fixed by the induction hypothesis.
Hence the degree-$n$ coefficient is uniquely determined.
This proves uniqueness of the solution with zero constant term.
\end{proof}

\section{Ordinary fibre enumeration via cycle-index methods}\label{sec:ordinary}
The weighted theory of Section~\ref{sec:weighted} led to the explicit formula~\eqref{eq:Wk-explicit}.
For the ordinary counts $F_\kk$, automorphisms are no longer divided out termwise, and the correct
combinatorics is that of multisets. The aim of this section is to derive an exact recursion for $F_\kk$
and to package it into an ordinary generating-series equation of cycle-index type.

To describe the ordinary fibre count, we first introduce the data that record how many
branches of each profile occur below the root.

\begin{definition}\label{def:profile-data}
Let
\begin{equation*}
\mathcal K_{-1}:=\{\,\bl:\ \bl \text{ is a multi-index and } \wt(\bl)=-1\,\}.
\end{equation*}
A {\bf profile-multiplicity datum} is a finitely supported family
\begin{equation*}
\nu=(\nu_\bl)_{\bl\in\mathcal K_{-1}},
\quad\text{where each }\nu_\bl\in \mathbb{Z}_{\geq 0}.
\end{equation*}
For such a family, define
\begin{equation*}
|\nu|:=\sum_{\bl\in\mathcal K_{-1}} \nu_\bl, \quad \Sigma(\nu):=\sum_{\bl\in\mathcal K_{-1}} \nu_\bl\,\bl.
\end{equation*}
\end{definition}

For $r,m \in \mathbb{Z}_{\geq 0}$, define the {\bf multiset number}
\begin{equation*}
\mathrm{Mlt}(r,m):=[z^m](1-z)^{-r},
\end{equation*}
where $[z^m](1-z)^{-r}$ denotes the coefficient of $z^m$ in $(1-z)^{-r}$.
Then
\begin{equation*}
\mathrm{Mlt}(r,m)=
\begin{cases}
\displaystyle \binom{r+m-1}{m}, & r\ge 1,\\[2ex]
\delta_{0,m}, & r=0.
\end{cases}
\end{equation*}
Moreover, if $X$ is a finite set with $|X|=r$, then $\mathrm{Mlt}(r,m)$ is exactly the number of multisets
of cardinality $m$ with elements in $X$.
 
With this notation, the ordinary fibre count is obtained by choosing the root type and
then a multiset of branch isomorphism classes.

\begin{theorem}\label{thm:ordinary-recursion}
Let $\kk$ be a multi-index with $\wt(\kk)=-1$.
Then
\begin{equation}\label{eq:ordinary-recursion}
F_{\kk}
=
\sum_{a\in A}
\sum_{\substack{j\ge -1\\ k_j^a\ge 1}}
\sum_{\substack{
\nu=(\nu_{\bl})_{\bl\in\mathcal K_{-1}}\\
\nu \text{ finitely supported}\\
|\nu|=j+1,\ \Sigma(\nu)=\kk-\ee_j^a
}}
\prod_{\bl\in\mathcal K_{-1}}
\operatorname{Mlt}(F_{\bl},\nu_{\bl}).
\end{equation}
The product is finite because $\nu$ is finitely supported.
\end{theorem}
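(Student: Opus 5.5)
The plan is to establish a bijection between $\Fib_\kk$ and the set of triples $(a,j,\mathcal M)$. Here $(a,j)$ is a root type with $k_j^a\ge 1$, and $\mathcal M$ is a multiset of isomorphism classes of decorated rooted trees. We require $\mathcal M$ to have cardinality $j+1$ and to satisfy $\sum_{s\in\mathcal M}\kk(s)=\kk-\ee_j^a$. The map sends an isomorphism class $t$ to the decoration $a$ of its root, the fertility $j+1$ of its root, and the multiset of isomorphism classes of the branches hanging from the children of the root. Profile additivity, exactly as in~\eqref{eq:profile-root-split}, together with Remark~\ref{rk:balance}, shows that every branch has weight $-1$ and that the constraint $\sum \kk(s)=\kk-\ee_j^a$ holds.

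The key step, and the only real obstacle, is to check that this map is well defined and bijective on isomorphism classes. For well-definedness, an isomorphism $t\to t'$ of decorated rooted trees maps the root to the root, so it preserves the root decoration and fertility. It also permutes the children, inducing isomorphisms between the corresponding branches, so the multiset of branch classes is unchanged. For injectivity, suppose two trees have the same root type and the same multiset of branch classes. Then one can match their branches in pairs by isomorphism classes, counting multiplicity, choose an isomorphism for each pair, and glue these together with the root-to-root map to obtain an isomorphism of the whole trees. This is exactly where multisets rather than ordered tuples are essential, since the children are unordered. For surjectivity, given $(a,j,\mathcal M)$, we pick representatives of the elements of $\mathcal M$ and graft them onto a new root decorated by $a$. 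The result is a tree of profile $\kk$ with root fertility $j+1$.

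Next we count the multisets $\mathcal M$ by grouping them according to profiles. A multiset $\mathcal M$ determines the datum $\nu$ with $\nu_\bl=$ the number of elements of $\mathcal M$, counted with multiplicity, lying in $\Fib_\bl$. This $\nu$ is finitely supported and satisfies $|\nu|=j+1$ and $\Sigma(\nu)=\kk-\ee_j^a$. Conversely, for a fixed $\nu$, specifying $\mathcal M$ amounts to choosing, independently for each $\bl$, a multiset of size $\nu_\bl$ from the finite set $\Fib_\bl$ of cardinality $F_\bl$. By the stated interpretation of $\mathrm{Mlt}$, there are $\mathrm{Mlt}(F_\bl,\nu_\bl)$ such choices for each $\bl$, with value $1$ when $\nu_\bl=0$. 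Hence the number of multisets attached to $\nu$ is $\prod_\bl \mathrm{Mlt}(F_\bl,\nu_\bl)$.

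Finally, we sum this product over all admissible $\nu$ and over all root types $(a,j)$. The sum over $\nu$ is finite because $\Sigma(\nu)=\kk-\ee_j^a$ bounds the support of $\nu$: every $\bl$ with $\nu_\bl>0$ satisfies $\bl\le\kk$ coordinatewise and $|\bl|\ge 1$. Summing gives~\eqref{eq:ordinary-recursion}. The case $j=-1$ is consistent: the only datum is $\nu=0$, which requires $\kk=\ee_{-1}^a$ and contributes $1$, namely the single-vertex tree.
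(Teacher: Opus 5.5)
Your proposal is correct and follows essentially the same route as the paper: you classify trees by root type and by the multiset of branch isomorphism classes, you prove that root-grafting is a bijection via the root-preserving isomorphism argument, and you count multisets profile by profile to obtain $\prod_{\bl}\mathrm{Mlt}(F_\bl,\nu_\bl)$. The differences are only in bookkeeping and in added detail: you biject onto triples $(a,j,\mathcal M)$ and then group by $\nu$, whereas the paper fixes $(a,j,\nu)$ first, and you also spell out well-definedness on isomorphism classes, finiteness of the $\nu$-sum, and the $j=-1$ case, which the paper leaves implicit.
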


\begin{proof}
We shall classify the trees in $\Fib_\kk$ by their root type and by the multiset of isomorphism classes of
their branches.
Let $t\in \Fib_\kk$.
Denote by $\rho$ the root of $t$.
Let $a=d(\rho)$ be the decoration of the root, and let $j+1=f(\rho)$ be its fertility.
Since the root contributes one vertex of decoration $a$ and fertility $j+1$, we have
$k_j^a\ge 1.$

Remove the root $\rho$ and all edges attached to it.
The remaining graph splits into exactly $j+1$ connected components, each of which is naturally a rooted
$A$-decorated tree, rooted at the child that used to be attached to $\rho$.
Let these rooted trees be
$
t_1,\dots,t_{j+1}.
$
For each $m\in\{1,\dots,j+1\}$, write
\begin{equation*}
\Phi(t_m)=\xx^{\bl^{m}}
\quad\text{with }
\bl^{m}\in \mathcal K_{-1},
\end{equation*}
which follows from Remark~\ref{rk:balance}.
Since the root contributes exactly $\ee_j^a$, and the profiles of the branches add, we obtain
\begin{equation}\label{eq:root-branch-additivity}
\kk=\ee_j^a+\bl^{1}+\cdots+\bl^{j+1}.
\end{equation}

Now group the branches by profile.
For each $\bl\in\mathcal K_{-1}$, define
\begin{equation*}
\nu_\bl:=\#\{\,m\in\{1,\dots,j+1\}: \bl^{m}=\bl\,\}.
\end{equation*}
Then $\nu=(\nu_\bl)_{\bl\in\mathcal K_{-1}}$ is finitely supported, and
\begin{equation}\label{eq:nu-from-tree-size}
|\nu|=\sum_{\bl\in\mathcal K_{-1}}\nu_\bl=j+1,
\end{equation}
because there are exactly $j+1$ branches, while
\begin{equation*}
\Sigma(\nu)=\sum_{\bl\in\mathcal K_{-1}}\nu_\bl\,\bl
=
\bl^{1}+\cdots+\bl^{j+1}
=
\kk-\ee_j^a
\end{equation*}
by~\eqref{eq:root-branch-additivity}.
Thus every tree $t\in\Fib_\kk$ determines a triple
$
(a,j,\nu)
$
satisfying the constraints appearing in~\eqref{eq:ordinary-recursion}.

We now count how many trees yield a fixed admissible triple $(a,j,\nu)$.
Fix such a triple.
For each profile $\bl\in\mathcal K_{-1}$, the fibre $\Fib_\bl$ is a finite set of cardinality $F_\bl$.
To build a tree in $\Fib_\kk$ with root decoration $a$, root fertility $j+1$, and branch-profile multiplicity
datum $\nu$, it is necessary and sufficient to choose, for every $\bl\in\mathcal K_{-1}$, a multiset
\begin{equation}\label{eq:multiset-choice}
M_\bl
\end{equation}
of cardinality $\nu_\bl$ with elements in the finite set $\Fib_\bl$.
Indeed, after all these multisets are chosen, one forms their disjoint union
\begin{equation*}
M:=\bigsqcup_{\bl\in\mathcal K_{-1}} M_\bl,
\end{equation*}
which is a multiset of exactly $j+1$ rooted trees, because~\eqref{eq:nu-from-tree-size} gives
\begin{equation*}
|M|=\sum_{\bl\in\mathcal K_{-1}}\nu_\bl=j+1.
\end{equation*}
Since each $F_\bl$ consists of isomorphism classes, choose one representative tree from
each selected class in the multiset $M$. Attaching their roots to a new root decorated
by $a$ yields an isomorphism class of decorated rooted trees, independent of the
chosen representatives.

We claim that this construction establishes a bijection between:
\begin{enumerate}
    \item trees in $\Fib_\kk$ with associated triple $(a,j,\nu)$;
    \item families $(M_\bl)_{\bl\in\mathcal K_{-1}}$ of multisets as in~\eqref{eq:multiset-choice}.
\end{enumerate}

The surjectivity is immediate from the construction.
For injectivity, suppose two such families produce rooted trees $t$ and $t'$.
Any isomorphism between $t$ and $t'$ must send the root to the root, because the root is the unique
vertex with no parent.
After removing the roots, such an isomorphism induces a bijection between the branch components of $t$
and those of $t'$, preserving isomorphism classes.
Hence the multisets of branch isomorphism classes must coincide.
Conversely, if the branch multisets coincide, then one can match equal branch isomorphism classes and
glue the corresponding branch isomorphisms to an isomorphism of the full rooted trees.
Therefore the resulting rooted tree is determined, up to isomorphism, exactly by the family of multisets
$(M_\bl)_\bl$.

For each fixed $\bl$, the number of possible multisets
$M_\bl$ is $\mathrm{Mlt}(F_\bl,\nu_\bl)$. The choices are independent for different $\bl$, so the number of families $(M_\bl)_\bl$ is
\begin{equation}\label{eq:product-mlt}
\prod_{\bl\in\mathcal K_{-1}} \mathrm{Mlt}(F_\bl,\nu_\bl).
\end{equation}
Summing~\eqref{eq:product-mlt} over all admissible triples $(a,j,\nu)$ proves~\eqref{eq:ordinary-recursion}.
\end{proof}

We now package the multiset choices appearing in Theorem~\ref{thm:ordinary-recursion}
into auxiliary generating series.

\begin{definition}\label{def:Hm}
For every integer $r\ge 1$, and for a family of variables $\uvec= (u_{a,j})_{a\in A,\; j\ge -1}$, define
\begin{equation*}
\uvec^{[r]}:=(u_{a,j}^r)_{a\in A,\; j\ge -1}.
\end{equation*}
For every integer $m\ge 0$, define the formal power series
\begin{equation*}
H_m(\uvec)
:=
\sum_{\substack{\nu \text{ finitely supported on }\mathcal K_{-1}\\ |\nu|=m}}
\left(
\prod_{\bl\in\mathcal K_{-1}}
\mathrm{Mlt}(F_\bl,\nu_\bl)
\right)
\uvec^{\Sigma(\nu)}.
\end{equation*}
\end{definition}

The auxiliary series $H_m(\mathbf u)$ admit a product expression, or equivalently a
plethystic exponential form.

\begin{proposition}\label{prop:ordinary-product}
The generating series of the formal power series $H_m(\uvec)$ satisfies
\begin{equation}\label{eq:Hm-product}
\sum_{m\ge 0} H_m(\uvec)\,z^m
=
\prod_{\bl\in\mathcal K_{-1}} (1-z\uvec^\bl)^{-F_\bl}
=
\exp\left(
\sum_{r\ge 1}\frac{z^r}{r}\,F(\uvec^{[r]})
\right).
\end{equation}
All identities are understood coefficientwise in the ring of formal power series in $z$ and the variables
$u_{a,j}$.
\end{proposition}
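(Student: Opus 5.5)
The plan is to prove the two equalities in~\eqref{eq:Hm-product} one after the other, working in the ring of formal power series in $z$ and the $u_{a,j}$.

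\emph{First equality.} For each fixed $\bl\in\mathcal K_{-1}$, the definition of the multiset numbers gives
\[
(1-z\uvec^\bl)^{-F_\bl}=\sum_{m\ge 0}\mathrm{Mlt}(F_\bl,m)\,z^m\uvec^{m\bl}.
\]
This holds even when $F_\bl=0$, since both sides are then $1$.

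Next I will check that the infinite product over $\bl$ makes sense. Each $\bl\in\mathcal K_{-1}$ satisfies $|\bl|\ge 1$, because weight $-1$ forces $k^a_{-1}\ge 1$ for some $a$. Also, since $A$ is finite, there are only finitely many $\bl$ of a given total degree $|\bl|$: the constraint $\sum (j+1)\ell^a_j=|\bl|-1$ bounds the fertility indices. Hence every factor is $1+O(\text{total degree}\ge1)$, and only finitely many factors affect any fixed coefficient of $z^m\uvec^{\kk}$. So the product converges in the formal (adic) topology.

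Expanding the product, one picks an exponent $\nu_\bl$ from each factor, almost all of them zero. This gives
\[
\prod_{\bl}(1-z\uvec^\bl)^{-F_\bl}=\sum_{\nu}\Big(\prod_\bl\mathrm{Mlt}(F_\bl,\nu_\bl)\Big) z^{|\nu|}\uvec^{\Sigma(\nu)}.
\]
Grouping the terms by $|\nu|=m$ gives exactly $\sum_m H_m(\uvec)z^m$, by Definition~\ref{def:Hm}. The same finiteness remark shows that each $H_m$ is a well-defined formal series.

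\emph{Second equality.} I will take logarithms, which is legitimate because every factor has constant term $1$. Using $-\log(1-X)=\sum_{r\ge1}X^r/r$ with $X=z\uvec^\bl$,
\[
\log\prod_\bl(1-z\uvec^\bl)^{-F_\bl}=\sum_\bl F_\bl\sum_{r\ge1}\frac{z^r\uvec^{r\bl}}{r}.
\]
Interchanging the two sums is justified by the same finiteness of contributions to each coefficient. The result is $\sum_{r\ge1}\frac{z^r}{r}\sum_\bl F_\bl(\uvec^{[r]})^\bl$.

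By Definition~\ref{def:counting}, the inner sum equals $F(\uvec^{[r]})$, since $(\uvec^{[r]})^{\bl}=\uvec^{r\bl}$. Applying $\exp$ then yields the right-hand side. Here $\exp$ and $\log$ are mutually inverse bijections between series with constant term $0$ and series with constant term $1$, and $\log$ turns convergent products into sums.

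The only real obstacle is the bookkeeping that justifies the infinite product and the interchange of summations. The key fact is that only finitely many $\bl\in\mathcal K_{-1}$ have bounded total degree, because $A$ is finite. Once this is stated, both equalities are routine coefficient comparisons.
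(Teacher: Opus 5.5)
Your proof is correct and follows the paper's argument essentially step by step. You expand each factor via the multiset numbers, multiply and group by $|\nu|=m$, then take the logarithm using $-\log(1-w)=\sum_{r\ge1}w^r/r$ and identify the inner sum with $F(\uvec^{[r]})$. The only difference is the finiteness bookkeeping: the paper notes that any $\bl$ contributing to $z^m\uvec^{\kk}$ is componentwise bounded by $\kk$, whereas you bound $|\bl|\le|\kk|$ and use the finiteness of $A$ together with $\sum(j+1)\ell_j^a=|\bl|-1$. Both justifications are valid.
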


\begin{proof}
We first prove the equality between the left-hand side and the infinite product.
For each $\bl\in\mathcal K_{-1}$, Definition~\ref{def:profile-data} gives
\begin{equation}\label{eq:single-factor-expansion}
(1-z\uvec^\bl)^{-F_\bl}
=
\sum_{n\ge 0}\mathrm{Mlt}(F_\bl,n)\,z^n\uvec^{n\bl}.
\end{equation}
Multiplying~\eqref{eq:single-factor-expansion} over all $\bl\in\mathcal K_{-1}$ suggests the expansion
\begin{equation}\label{eq:formal-product-expansion}
\prod_{\bl\in\mathcal K_{-1}} (1-z\uvec^\bl)^{-F_\bl}
=
\sum_{\nu}
\left(
\prod_{\bl\in\mathcal K_{-1}}\mathrm{Mlt}(F_\bl,\nu_\bl)
\right)
z^{|\nu|}\uvec^{\Sigma(\nu)},
\end{equation}
where the sum runs over all finitely supported families $\nu$.
To justify this coefficientwise, fix a monomial $z^m\uvec^\kk$.
If a family $\nu$ contributes to the coefficient of $z^m\uvec^\kk$, then
$
|\nu|=m$ and
$\Sigma(\nu)=\kk.
$
In particular, if $\nu_\bl>0$, then $\bl$ is componentwise bounded by $\kk$, because
$\Sigma(\nu)=\kk$ is a sum of nonnegative multiples of multi-indices.
Since $\kk$ has finite support, there are only finitely many multi-indices $\bl$ componentwise bounded by $\kk$.
Hence only finitely many $\bl$ can occur in a contributing family $\nu$, and so the coefficient of
$z^m\uvec^\kk$ is a finite sum.
Therefore~\eqref{eq:formal-product-expansion} is valid coefficientwise.

Now group in~\eqref{eq:formal-product-expansion} the terms of fixed $z$-degree $m$.
Using Definition~\ref{def:Hm}, we obtain
\begin{equation}\label{eq:group-by-m}
\prod_{\bl\in\mathcal K_{-1}} (1-z\uvec^\bl)^{-F_\bl}
=
\sum_{m\ge 0} H_m(\uvec)\,z^m,
\end{equation}
which proves the first equality in~\eqref{eq:Hm-product}.

We next prove the exponential form.
Since the product in~\eqref{eq:group-by-m} has constant term $1$, its formal logarithm is well-defined.
Using the power-series identity $-\log(1-w)=\sum_{r\ge 1} w^r/r$, we get
\begin{align}
\log\!\left(\prod_{\bl\in\mathcal K_{-1}} (1-z\uvec^\bl)^{-F_\bl}\right)
&=
-\sum_{\bl\in\mathcal K_{-1}} F_\bl\,\log(1-z\uvec^\bl)
\notag\\
&=
\sum_{\bl\in\mathcal K_{-1}} F_\bl \sum_{r\ge 1}\frac{(z\uvec^\bl)^r}{r}
\notag\\
&=
\sum_{r\ge 1}\frac{z^r}{r}
\sum_{\bl\in\mathcal K_{-1}} F_\bl\,\uvec^{r\bl}.
\label{eq:log-product}
\end{align}
Here again the manipulations are coefficientwise legitimate:
for a fixed coefficient of $z^m\uvec^\kk$, only finitely many pairs $(r,\bl)$ can contribute, because
$r\le m$ and $\bl$ is componentwise bounded by $\kk$.
By Definition~\ref{def:Hm},
\begin{equation*}
(\uvec^{[r]})^\bl=\uvec^{r\bl}.
\end{equation*}
Therefore
\begin{equation}\label{eq:F-substitution}
\sum_{\bl\in\mathcal K_{-1}} F_\bl\,\uvec^{r\bl}
=
\sum_{\bl\in\mathcal K_{-1}} F_\bl\,(\uvec^{[r]})^\bl
=
F(\uvec^{[r]}),
\end{equation}
because $F(\uvec)=\sum_{\wt(\bl)=-1} F_\bl \uvec^\bl$ by Definition~\ref{def:counting}.
Substituting~\eqref{eq:F-substitution} into~\eqref{eq:log-product} yields
\begin{equation*}
\log\!\left(\prod_{\bl\in\mathcal K_{-1}} (1-z\uvec^\bl)^{-F_\bl}\right)
=
\sum_{r\ge 1}\frac{z^r}{r}\,F(\uvec^{[r]}).
\end{equation*}
Exponentiating both sides proves the second equality in~\eqref{eq:Hm-product}.
\end{proof}

The species $\mathrm{SET}_m$ encodes unordered sets of cardinality $m$~\cite{BLL}.
In more details,
\[
\mathrm{SET}_m[U]
:=
\begin{cases}
\{U\}, & \text{ if } |U|=m,\\
\emptyset, &  \text{ otherwise }.
\end{cases}
\]
It is relevant here because, after removing the root of a rooted tree with fertility $m$,
one obtains an unordered collection of $m$ rooted subtrees.


\begin{definition}
For every integer $m\ge 0$, define the {\bf cycle-index polynomial} of the species $\mathrm{SET}_m$ by
\begin{equation*}
Z_{\mathrm{SET}_m}(p_1,\dots,p_m)
:=
\sum_{\lambda\vdash m}\frac{p_\lambda}{z_\lambda},
\end{equation*}
where, for a partition $\lambda\vdash m$ written in multiplicity notation as
$\lambda=1^{m_1}2^{m_2}\cdots$, the integer $m_r\in \mathbb{Z}_{\geq 0}$ denotes the number of parts of
$\lambda$ equal to $r$. Thus
$
m=\sum_{r\ge 1} r m_r,
$
and we set
\[
p_\lambda:=\prod_{r\ge 1}p_r^{m_r},
\qquad
z_\lambda:=\prod_{r\ge 1}r^{m_r}m_r!.
\]
\end{definition}

The cycle-index description of unordered sets gives the following functional equation
for the ordinary fibre generating series.

\begin{proposition}\label{prop:ordinary-functional}
The ordinary generating series $F(\uvec)$ satisfies the equivalent identities
\begin{align}
F(\uvec)
&=
\sum_{a\in A}\sum_{j\ge -1} u_{a,j}\,H_{j+1}(\uvec),\label{eq:ordinary-functional-H}\\
F(\uvec)
&=
\sum_{a\in A}\sum_{j\ge -1}
u_{a,j}\,[z^{j+1}]
\prod_{\bl\in\mathcal K_{-1}} (1-z\uvec^\bl)^{-F_\bl},\label{eq:ordinary-functional-coeff}\\
F(\uvec)
&=
\sum_{a\in A}\sum_{j\ge -1}
u_{a,j}\,
Z_{\mathrm{SET}_{j+1}}
\big(
F(\uvec^{[1]}),F(\uvec^{[2]}),\dots,F(\uvec^{[j+1]})
\big).\label{eq:ordinary-functional-cycle}
\end{align}
\end{proposition}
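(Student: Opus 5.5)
The plan is to derive \eqref{eq:ordinary-functional-H} directly from Theorem~\ref{thm:ordinary-recursion}, and then get the other two identities by rewriting $H_{j+1}(\uvec)$ with Proposition~\ref{prop:ordinary-product}.

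\emph{Step 1: proving \eqref{eq:ordinary-functional-H}.} I will compare coefficients of $\uvec^\kk$ on both sides. By Definition~\ref{def:Hm}, the coefficient of $\uvec^{\kk-\ee_j^a}$ in $H_{j+1}(\uvec)$ is the sum of $\prod_\bl \mathrm{Mlt}(F_\bl,\nu_\bl)$ over all $\nu$ with $|\nu|=j+1$ and $\Sigma(\nu)=\kk-\ee_j^a$. Multiplying by $u_{a,j}$ moves this to the coefficient of $\uvec^\kk$. This term is present only when $k_j^a\ge 1$, because $\Sigma(\nu)$ has nonnegative entries. Summing over $(a,j)$ then reproduces exactly the right-hand side of \eqref{eq:ordinary-recursion}.

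Two side checks are needed.
\begin{itemize}
\item Weights: every monomial of $u_{a,j}H_{j+1}$ has weight $j+(j+1)(-1)=-1$, so for $\wt(\kk)\neq-1$ both sides have coefficient $0$.
\item Well-definedness: as in the proof of Theorem~\ref{thm:weighted-functional}, $H_{j+1}$ has total degree at least $j+1$, so only finitely many $(a,j)$ contribute to a fixed monomial.
\end{itemize}
Since the coefficient of $\uvec^\kk$ in $F(\uvec)$ is $F_\kk$, Theorem~\ref{thm:ordinary-recursion} gives \eqref{eq:ordinary-functional-H}.

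\emph{Step 2: proving \eqref{eq:ordinary-functional-coeff}.} By the first equality of \eqref{eq:Hm-product}, $H_{j+1}(\uvec)=[z^{j+1}]\prod_\bl(1-z\uvec^\bl)^{-F_\bl}$. Substituting this into \eqref{eq:ordinary-functional-H} gives \eqref{eq:ordinary-functional-coeff} at once.

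\emph{Step 3: proving \eqref{eq:ordinary-functional-cycle}.} I will show that $H_m(\uvec)=Z_{\mathrm{SET}_m}(F(\uvec^{[1]}),\dots,F(\uvec^{[m]}))$. Set $p_r:=F(\uvec^{[r]})$. By the second equality of \eqref{eq:Hm-product}, $\sum_m H_m z^m=\exp(\sum_{r\ge1} p_r z^r/r)$. Expanding $\exp(\sum_{r\ge 1} p_rz^r/r)=\prod_{r\ge1}\exp(p_rz^r/r)=\prod_{r\ge1}\sum_{m_r\ge0}p_r^{m_r}z^{rm_r}/(r^{m_r}m_r!)$ and extracting $[z^m]$ gives $\sum_{\lambda\vdash m}p_\lambda/z_\lambda$. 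This is the classical cycle-index identity for $\mathrm{SET}_m$.

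Step 3 is the only point needing care: I must justify that this formal manipulation is valid coefficientwise in the ring of series in $z$ and $\uvec$. This holds because each $p_r$ has no constant term and only $r\le m$ contributes to $[z^m]$. With that, substituting into \eqref{eq:ordinary-functional-H} yields \eqref{eq:ordinary-functional-cycle}. Since each identity is obtained from \eqref{eq:ordinary-functional-H} by replacing $H_{j+1}$ with an equal series, the three identities are equivalent.
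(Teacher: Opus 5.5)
Your proposal is correct and follows the paper's proof essentially step for step. You obtain \eqref{eq:ordinary-functional-H} by comparing coefficients of $\uvec^\kk$ against Theorem~\ref{thm:ordinary-recursion}, using the weight argument when $\wt(\kk)\neq-1$, and then read off \eqref{eq:ordinary-functional-coeff} and \eqref{eq:ordinary-functional-cycle} from the two equalities of Proposition~\ref{prop:ordinary-product}, expanding $\exp(\sum_{r\ge1}p_rz^r/r)$ into $\sum_{\lambda\vdash m}p_\lambda/z_\lambda$ just as the paper does. (Your remark that each $p_r$ has no constant term is harmless but not needed: the cycle-index identity already holds in $\mathbb{Q}[p_1,p_2,\dots][[z]]$, and substituting $p_r\mapsto F(\uvec^{[r]})$ is a ring map.)
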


\begin{proof}
We first prove~\eqref{eq:ordinary-functional-H}.
Fix a multi-index $\kk$.
If $\wt(\kk)\neq -1$, then the coefficient of $\uvec^\kk$ in the right-hand side of
\eqref{eq:ordinary-functional-H} is $0$, because every monomial appearing in
$u_{a,j}H_{j+1}(\uvec)$ has weight
$
j+(j+1)(-1)=-1.
$
Hence both sides have zero coefficient at $\uvec^\kk$.
Assume now that $\wt(\kk)=-1$.
The coefficient of $\uvec^\kk$ in the right-hand side of~\eqref{eq:ordinary-functional-H} is
\begin{equation}\label{eq:coeff-H}
\sum_{a\in A}\sum_{\substack{j\ge -1\\ k_j^a\ge 1}}
[\uvec^{\kk-\ee_j^a}]\,H_{j+1}(\uvec).
\end{equation}
By Definition~\ref{def:Hm}, the coefficient in~\eqref{eq:coeff-H} is exactly
\begin{equation*}
\sum_{a\in A}\sum_{\substack{j\ge -1\\ k_j^a\ge 1}}
\;
\sum_{\substack{\nu \text{ finitely supported on }\mathcal K_{-1}\\
|\nu|=j+1,\ \Sigma(\nu)=\kk-\ee_j^a}}
\;
\prod_{\bl\in\mathcal K_{-1}}
\mathrm{Mlt}(F_\bl,\nu_\bl),
\end{equation*}
which is $F_\kk$ by Theorem~\ref{thm:ordinary-recursion}.
Thus the coefficient of $\uvec^\kk$ agrees on both sides of~\eqref{eq:ordinary-functional-H}.
This proves~\eqref{eq:ordinary-functional-H} coefficientwise.

The equivalence between~\eqref{eq:ordinary-functional-H} and~\eqref{eq:ordinary-functional-coeff}
is immediate from Proposition~\ref{prop:ordinary-product}, since
\begin{equation*}
H_{j+1}(\uvec)
=
[z^{j+1}]
\prod_{\bl\in\mathcal K_{-1}} (1-z\uvec^\bl)^{-F_\bl}.
\end{equation*}

It remains to prove~\eqref{eq:ordinary-functional-cycle}.
For this, recall the classical generating-series identity
\begin{equation}\label{eq:cycle-index-gf}
\sum_{m\ge 0} Z_{\mathrm{SET}_m}(p_1,\dots,p_m)\,z^m
=
\exp\left(\sum_{r\ge 1}\frac{p_r z^r}{r}\right).
\end{equation}
Let us justify~\eqref{eq:cycle-index-gf}.
Expanding the exponential gives
\begin{align}
\exp\left(\sum_{r\ge 1}\frac{p_r z^r}{r}\right)
&=
\prod_{r\ge 1}\exp\left(\frac{p_r z^r}{r}\right)
\notag\\
&=
\prod_{r\ge 1}\sum_{m_r\ge 0}\frac{p_r^{m_r} z^{r m_r}}{r^{m_r}m_r!}
\notag\\
&=
\sum_{(m_r)_{r\ge 1}}
\left(
\prod_{r\ge 1}\frac{p_r^{m_r}}{r^{m_r}m_r!}
\right)
z^{\sum_{r\ge 1} r m_r}.
\label{eq:cycle-index-expand}
\end{align}
Grouping the terms in~\eqref{eq:cycle-index-expand} according to the partition
\begin{equation*}
\lambda=1^{m_1}2^{m_2}\cdots
\end{equation*}
of $m=\sum_r r m_r$,  the coefficient of $z^m$ is precisely
\begin{equation*}
\sum_{\lambda\vdash m}\frac{p_\lambda}{z_\lambda}
=
Z_{\mathrm{SET}_m}(p_1,\dots,p_m).
\end{equation*}
This proves~\eqref{eq:cycle-index-gf}.

Now substitute
$p_r:=F(\uvec^{[r]})$ for $r\ge 1.$
By Proposition~\ref{prop:ordinary-product},~\eqref{eq:cycle-index-gf} becomes
\begin{equation*}
\sum_{m\ge 0} Z_{\mathrm{SET}_m}(p_1,\dots,p_m)\,z^m
=
\exp\left(\sum_{r\ge 1}\frac{z^r}{r}F(\uvec^{[r]})\right)
=
\sum_{m\ge 0} H_m(\uvec)\,z^m.
\end{equation*}
Comparing coefficients of $z^{j+1}$ gives
\begin{equation}\label{eq:H-as-cycle-index}
H_{j+1}(\uvec)
=
Z_{\mathrm{SET}_{j+1}}
\big(
F(\uvec^{[1]}),F(\uvec^{[2]}),\dots,F(\uvec^{[j+1]})
\big).
\end{equation}
Inserting~\eqref{eq:H-as-cycle-index} into~\eqref{eq:ordinary-functional-H} yields
\eqref{eq:ordinary-functional-cycle}.
\end{proof}

\section{Coefficient generating functions of the lowering derivation}\label{sec:coeff}
In the present section we introduce a different family of generating functions, namely the
coefficient generating functions attached to the lowering derivation $\bar\partial$ on the
full polynomial algebra $N(A)$.

We first recall the lowering derivation and the shift operation on multi-indices from~\cite{ZGM2026}; these are the
basic operations behind the coefficients studied in this section.

\begin{definition}\label{def:lowering}
Let $N(A)$ be the polynomial algebra generated by the variables $x_j^a$ with
$a\in A$ and $j\ge -1$.

\begin{enumerate}
\item Define a derivation $\bar\partial:N(A)\to N(A)$ by
\begin{equation}\label{eq:lowering-on-generators}
\bar\partial(x_j^a):=x_{j-1}^a
\quad\text{for }j\ge0,
\quad
\bar\partial(x_{-1}^a):=0.
\end{equation}

\item A {\bf lowering multi-index} is a multi-index $\boldsymbol\ell$ on $I_A$ satisfying
\[
\ell_{-1}^a=0
\quad
\text{for all }a\in A.
\]
For such an $\boldsymbol\ell$, its left shift is the multi-index
$\overleftarrow{\boldsymbol\ell}$ on $I_A$ defined by
\[
(\overleftarrow{\boldsymbol\ell})_j^a:=\ell_{j+1}^a
\quad
\text{for every }a\in A,\ j\ge -1.
\]
\end{enumerate}
\end{definition}

Unless explicitly stated otherwise, every occurrence of $\bl$ in this section denotes
a lowering multi-index.
%
Before expanding iterates of $\bar\partial$, we record when a target multi-index can be obtained
from a source multi-index by such a lowering shift.

\begin{proposition}\label{prop:unique-shift}
Let $\kk$ and $\bb$ be multi-indices on
$I_A$.
Assume that, for every decoration $a\in A$,
\begin{equation}\label{eq:decoration-balance}
\sum_{j\ge -1}k_j^a=\sum_{j\ge -1}b_j^a.
\end{equation}
Then there exists a lowering multi-index $\bl$ such that
\begin{equation}\label{eq:target-shift}
\bb=\kk-\bl+\overleftarrow{\bl}
\end{equation}
if and only if
\begin{equation}\label{eq:lambda-nonnegative}
\Lambda_j^a(\kk,\bb):=\sum_{m\ge j}(k_m^a-b_m^a)\ge 0
\quad
\text{for every } a\in A,\ j\ge 0.
\end{equation}
In that case, the  lowering multi-index $\bl = (\ell_j^a)_{a\in A, j\geq -1}$ is unique, and is given by
\begin{equation}\label{eq:l-explicit}
\ell_j^a=\Lambda_j^a(\kk,\bb)=\sum_{m\ge j}(k_m^a-b_m^a) \quad \text{for } j\geq 0.
\end{equation}
\end{proposition}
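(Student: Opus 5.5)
The plan is to rewrite the vector identity \eqref{eq:target-shift} coordinatewise and solve it as a telescoping system, one decoration at a time. The decorations never interact, since both $\bl\mapsto\overleftarrow{\bl}$ and \eqref{eq:decoration-balance} act separately on each $a$. So I fix $a\in A$ and write the coordinates of $\bb=\kk-\bl+\overleftarrow{\bl}$, using $\ell_{-1}^a=0$:
\[
b_{-1}^a=k_{-1}^a+\ell_0^a,\qquad b_j^a=k_j^a-\ell_j^a+\ell_{j+1}^a\quad (j\ge 0).
\]
Equivalently, $\ell_j^a-\ell_{j+1}^a=k_j^a-b_j^a$ for $j\ge 0$, together with $\ell_0^a=b_{-1}^a-k_{-1}^a$.

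For the forward direction and uniqueness, suppose a lowering $\bl$ satisfies \eqref{eq:target-shift}. Because $\bl$ is finitely supported, $\ell_M^a=0$ for all large $M$. Summing the differences $\ell_m^a-\ell_{m+1}^a$ over $m=j,\dots,M-1$ telescopes to give
\[
\ell_j^a=\sum_{m\ge j}(k_m^a-b_m^a)=\Lambda_j^a(\kk,\bb)\quad(j\ge 0),
\]
where the sum is finite since $\kk,\bb$ are finitely supported. This proves \eqref{eq:l-explicit} and hence uniqueness. It also yields \eqref{eq:lambda-nonnegative}, because each $\ell_j^a\ge 0$.

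For the converse, I assume \eqref{eq:lambda-nonnegative} and define $\ell_{-1}^a:=0$ and $\ell_j^a:=\Lambda_j^a(\kk,\bb)$ for $j\ge 0$. These entries are nonnegative integers, and they vanish once $j$ exceeds the supports of $\kk$ and $\bb$, so $\bl$ is a lowering multi-index. For $j\ge 0$ the coordinate equation holds because $\Lambda_j^a-\Lambda_{j+1}^a=k_j^a-b_j^a$.

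The only step needing care is the $(-1)$-coordinate, and this is where \eqref{eq:decoration-balance} enters; it is the main point to get right. Here I need $\ell_0^a=b_{-1}^a-k_{-1}^a$. The balance condition gives $\sum_{m\ge 0}(k_m^a-b_m^a)=b_{-1}^a-k_{-1}^a$, and the left side is exactly $\Lambda_0^a$, so the equation holds. This step also shows that, without the balance hypothesis, existence could fail only at $j=-1$.
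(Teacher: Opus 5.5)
Your proof is correct and follows essentially the same route as the paper. You telescope the coordinate relations $\ell_j^a-\ell_{j+1}^a=k_j^a-b_j^a$ ($j\ge 0$) using finite support to get necessity and uniqueness; for the converse you set $\ell_j^a:=\Lambda_j^a(\mathbf{k},\mathbf{b})$ and use the balance hypothesis only to check the $(a,-1)$-coordinate $b_{-1}^a=k_{-1}^a+\ell_0^a$, exactly as the paper does.
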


\begin{proof}
Assume first that~\eqref{eq:target-shift} holds for some finitely supported $\bl$.
For every $a\in A$ and every $j\ge 0$, the $j$-th coordinate of
\eqref{eq:target-shift} reads
\begin{equation*}
b_j^a=k_j^a-\ell_j^a+\ell_{j+1}^a.
\end{equation*}
Rearranging gives
\begin{equation}\label{eq:coordinate-difference}
k_j^a-b_j^a=\ell_j^a-\ell_{j+1}^a.
\end{equation}
Summing~\eqref{eq:coordinate-difference} over all $m\ge j$ and using the finite support of $\bl$,
we obtain
\begin{equation*}
\sum_{m\ge j}(k_m^a-b_m^a)
=
\sum_{m\ge j}(\ell_m^a-\ell_{m+1}^a)
=
\ell_j^a.
\end{equation*}
Thus~\eqref{eq:l-explicit} holds, and in particular
$\Lambda_j^a(\kk,\bb)=\ell_j^a\ge 0$.
This proves the necessity of~\eqref{eq:lambda-nonnegative}, as well as uniqueness.

Conversely, assume~\eqref{eq:decoration-balance} and~\eqref{eq:lambda-nonnegative}, and define
$\bl$ by~\eqref{eq:l-explicit}.
Since $\kk$ and $\bb$ are finitely supported, $\bl$ is finitely supported as well.
For every $a\in A$ and every $j\ge 0$, we have
\begin{align*}
\ell_j^a-\ell_{j+1}^a=\sum_{m\ge j}(k_m^a-b_m^a)-\sum_{m\ge j+1}(k_m^a-b_m^a)=k_j^a-b_j^a,
\end{align*}
and so
\begin{equation*}
b_j^a=k_j^a-\ell_j^a+\ell_{j+1}^a
\quad
\text{for every } j\ge 0.
\end{equation*}
It remains to check the coordinate $j=-1$.
By~\eqref{eq:decoration-balance} and~\eqref{eq:l-explicit} with $j=0$,
\begin{align}
b_{-1}^a
=
\sum_{m\ge -1}k_m^a-\sum_{j\ge 0}b_j^a
=
k_{-1}^a+\sum_{j\ge 0}(k_j^a-b_j^a)
=
k_{-1}^a+\ell_0^a,
\label{eq:minus-one-coordinate}
\end{align}
which is exactly the $(a,-1)$-coordinate of~\eqref{eq:target-shift}.
Therefore~\eqref{eq:target-shift} holds.
\end{proof}

We now expand the iterates of the lowering derivation in the monomial basis and define the
corresponding shift coefficients.  

\begin{proposition}\label{prop:C-recursion}
For every multi-index $\kk \in \NANMO$ and every integer
$r\ge 0$, there exist unique integers $C_{\kk,\bl}$ such that
\begin{equation}\label{eq:C-expansion}
\bar\partial^r \xx^\kk
=
\sum_{ |\bl|=r}
C_{\kk,\bl}\,
\xx^{\kk-\bl+\overleftarrow{\bl}}.
\end{equation}
These coefficients satisfy
\begin{equation}\label{eq:C-recursive}
C_{\kk,\mathbf 0}=1, \quad C_{\kk,\bl}
=
\sum_{a\in A}
\sum_{\substack{j\ge 0\\ \ell_j^a\ge 1}}
C_{\kk,\bl-\ee_j^a}
\big(k_j^a-\ell_j^a+1+\ell_{j+1}^a\big) 
\end{equation}
for every nonzero lowering multi-index $\bl$.
\end{proposition}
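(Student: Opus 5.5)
We argue by induction on $r$. Uniqueness comes first. Fix $\kk$; the map $\bl\mapsto \kk-\bl+\overleftarrow{\bl}$ on lowering multi-indices is injective. Indeed, the relation $b_j^a=k_j^a-\ell_j^a+\ell_{j+1}^a$ for $j\ge0$ determines $\bl$ by the telescoping argument of Proposition~\ref{prop:unique-shift}: $\ell_j^a=\sum_{m\ge j}(k_m^a-b_m^a)$, and this needs only finite support, not the positivity hypothesis. Distinct $\bl$ therefore index distinct exponents. Since the monomials $\xx^{\mathbf b}$ form a basis of $N(A)$, once an expansion of the form~\eqref{eq:C-expansion} exists, its coefficients are uniquely determined.

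We adopt a convention. Some exponents $\kk-\bl+\overleftarrow{\bl}$ may have negative entries. For these we set $\xx^{\mathbf b}:=0$, and we check that the recursive coefficients vanish there. It is cleaner to \emph{define} $C_{\kk,\bl}$ by the recursion~\eqref{eq:C-recursive}, with $C_{\kk,\mathbf 0}=1$, and then prove~\eqref{eq:C-expansion} by induction on $r=|\bl|$. The case $r=0$ is trivial.

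For the inductive step we apply $\bar\partial$ to $\bar\partial^{r-1}\xx^\kk=\sum_{|\bl'|=r-1}C_{\kk,\bl'}\xx^{\mathbf b'}$, where $\mathbf b'=\kk-\bl'+\overleftarrow{\bl'}$. Since $\bar\partial$ is a derivation, it acts on a monomial by
\[
\bar\partial\,\xx^{\mathbf b'}=\sum_{a\in A}\sum_{j\ge0} b_j'^{a}\,\xx^{\mathbf b'-\ee_j^a+\ee_{j-1}^a}.
\]
One checks that $\mathbf b'-\ee_j^a+\ee_{j-1}^a=\kk-\bl+\overleftarrow{\bl}$ with $\bl=\bl'+\ee_j^a$, because $\overleftarrow{\ee_j^a}=\ee_{j-1}^a$; since $j\ge0$, $\bl$ is again a lowering multi-index. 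Regrouping by the new index $\bl$, each pair $(a,j)$ with $\ell_j^a\ge1$ contributes with $\bl'=\bl-\ee_j^a$ and multiplicity
\[
b_j'^a=k_j^a-(\ell_j^a-1)+\ell_{j+1}^a,
\]
using $(\bl-\ee_j^a)_{j+1}^a=\ell_{j+1}^a$ since $j+1\neq j$. This is exactly the factor in~\eqref{eq:C-recursive}, so the coefficient of $\xx^{\kk-\bl+\overleftarrow{\bl}}$ in $\bar\partial^r\xx^\kk$ is $C_{\kk,\bl}$. Integrality is immediate from the recursion.

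The main point requiring care is the bookkeeping of degenerate terms. If $b_j'^a=0$, the derivative term carries the factor $0$, so no ill-defined monomial with a negative exponent ever acquires a nonzero coefficient. Induction then shows that $C_{\kk,\bl}=0$ whenever $\kk-\bl+\overleftarrow{\bl}$ has a negative entry. Together with the injectivity step, which makes the regrouping a bijective reindexing, this gives both existence and uniqueness.
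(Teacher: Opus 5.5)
Your proposal is correct and follows essentially the same route as the paper: induction on $r$ using the action of $\bar\partial$ on monomials, the reindexing $\bl=\bl'+\ee_j^a$ that produces the factor $k_j^a-\ell_j^a+1+\ell_{j+1}^a$, and uniqueness from the telescoping identity in the proof of Proposition~\ref{prop:unique-shift}. Defining $C_{\kk,\bl}$ by the recursion and then proving by induction that it vanishes whenever $\kk-\bl+\overleftarrow{\bl}$ has a negative entry is only a reorganization, but it makes explicit something the paper uses without checking: the recursion is consistent with the convention $C_{\kk,\bl}:=0$ at such $\bl$.
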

Here, in~\eqref{eq:C-expansion} and throughout, whenever an exponent multi-index has
a negative component, the corresponding monomial is understood to be zero.  Moreover,
we set
$
C_{\kk,\bl}:=0
$
whenever $\kk-\bl+\overleftarrow{\bl}$ has a negative component.

\begin{proof}[Proof of Proposition~\ref{prop:C-recursion}]
We first note that
\begin{equation}\label{eq:lowering-on-monomials}
\bar\partial(\xx^\mm)
=
\sum_{a\in A}\sum_{j\ge 0} m_j^a\,\xx^{\mm-\ee_j^a+\ee_{j-1}^a}
\end{equation}
for every multi-index $\mm$.
 
We prove~\eqref{eq:C-expansion} by induction on $r$.
If $r=0$, then
$
\bar\partial^0\xx^\kk=\xx^\kk,
$
which is exactly~\eqref{eq:C-expansion} with the unique term
$\bl=\mathbf 0$ and coefficient $C_{\kk,\mathbf 0}=1$.
This proves the first equation in~\eqref{eq:C-recursive}.
Assume now that~\eqref{eq:C-expansion} holds for some fixed $r\ge 0$.
Then
\begin{align}
\bar\partial^{r+1}\xx^\kk
&=
\bar\partial\!\left(
\sum_{  |\bl'|=r }
C_{\kk,\bl'}\,\xx^{\kk-\bl'+\overleftarrow{\bl'}}
\right) =
\sum_{  |\bl'|=r }
C_{\kk,\bl'}
\bar\partial\!\left(\xx^{\kk-\bl'+\overleftarrow{\bl'}}\right)
\notag\\
&=
\sum_{  |\bl'|=r}
C_{\kk,\bl'}
\sum_{a\in A}\sum_{j\ge 0}
\big(k_j^a-\ell_j'{}^a+\ell_{j+1}'{}^a\big)
\xx^{\kk-\bl'+\overleftarrow{\bl'}-\ee_j^a+\ee_{j-1}^a},
\label{eq:induction-step-1}
\end{align}
where the final equality follows from~\eqref{eq:lowering-on-monomials}.
Here, by the convention following~\eqref{eq:C-expansion}, terms with
$\kk-\bl'+\overleftarrow{\bl'}$ having a negative component have coefficient
$C_{\kk,\bl'}=0$ and do not contribute.  Hence, in applying
\eqref{eq:lowering-on-monomials}, we only need to consider those $\bl'$ for which
$\kk-\bl'+\overleftarrow{\bl'}$ is componentwise nonnegative.

Now fix $a\in A$ and $j\ge 0$, and set
$
\bl:=\bl'+\ee_j^a.
$
Then $|\bl|=r+1$ and
$
\overleftarrow{\bl}
=
\overleftarrow{\bl'}+\ee_{j-1}^a.
$
Hence
\begin{equation}\label{eq:monomial-reindex}
\xx^{\kk-\bl'+\overleftarrow{\bl'}-\ee_j^a+\ee_{j-1}^a}
=
\xx^{\kk-\bl+\overleftarrow{\bl}}.
\end{equation}
Moreover,
\begin{equation}\label{eq:coefficient-reindex}
k_j^a-\ell_j'{}^a+\ell_{j+1}'{}^a
=
k_j^a-\ell_j^a+1+\ell_{j+1}^a.
\end{equation}
Substituting~\eqref{eq:monomial-reindex} and~\eqref{eq:coefficient-reindex}
into~\eqref{eq:induction-step-1}, we obtain
\begin{equation*}
\bar\partial^{r+1}\xx^\kk
=
\sum_{  |\bl|=r+1}
\left(
\sum_{a\in A}\sum_{\substack{j\ge 0\\ \ell_j^a\ge 1}}
C_{\kk,\bl-\ee_j^a}
\big(k_j^a-\ell_j^a+1+\ell_{j+1}^a\big)
\right)
\xx^{\kk-\bl+\overleftarrow{\bl}}.
\end{equation*}
This proves both the existence of the coefficients $C_{\kk,\bl}$ and the recursion
\eqref{eq:C-recursive}.
The uniqueness of the family $(C_{\kk,\bl})_{\bl}$ follows from
Proposition~\ref{prop:unique-shift} since $\bl $ is unique in $\bb=\kk-\bl+\overleftarrow{\bl}$.
\end{proof}

For later use in the coproduct formula, it is convenient to renormalize the shift coefficients by
the factorial of the target multi-index.
For an integer-valued finitely supported family $\mathbf q$ on $I_A$, the factorial
$\mathbf q!$ is defined only when $\mathbf q\ge\mathbf0$.  Whenever a summand contains
$\mathbf q!$ or $1/\mathbf q!$, that summand is understood to occur only in this case;
otherwise the summand is omitted, or equivalently interpreted as zero.

\begin{definition}\label{def:D-coeff}
For every multi-index $\mathbf k$ and every lowering multi-index $\boldsymbol\ell$,
define
\begin{equation}\label{eq:D-definition}
D_{\mathbf k,\boldsymbol\ell}
:=
\begin{cases}
C_{\mathbf k,\boldsymbol\ell}
(\mathbf k-\boldsymbol\ell+\overleftarrow{\boldsymbol\ell})!,
&
\text{if }
\mathbf k-\boldsymbol\ell+\overleftarrow{\boldsymbol\ell}\ge\mathbf0,\\[1ex]
0,
&
\text{otherwise}.
\end{cases}
\end{equation}
\end{definition}


The renormalized coefficients satisfy a recursion obtained by rewriting the recursion for
$C_{\kk,\bl}$ in factorial-normalized form.

\begin{proposition}\label{prop:D-recursion}
The coefficients of Definition~\ref{def:D-coeff} satisfy
\begin{equation} \label{eq:D-recursive}
D_{\kk,\mathbf 0}=\kk!, \quad D_{\kk,\bl}
=
\sum_{a\in A}
\sum_{\substack{j\ge 0\\ \ell_j^a\ge 1}}
D_{\kk,\bl-\ee_j^a}
\big(k_{j-1}^a-\ell_{j-1}^a+\ell_j^a\big)
\end{equation}
for every nonzero lowering multi-index $\boldsymbol\ell$ such that
$
\mathbf k-\boldsymbol\ell+\overleftarrow{\boldsymbol\ell}\ge\mathbf0,
$
where
$D_{\mathbf k,\boldsymbol\ell-\mathbf e_j^a}$ is understood to be zero whenever
$
\mathbf k-(\boldsymbol\ell-\mathbf e_j^a)
+\overleftarrow{\boldsymbol\ell-\mathbf e_j^a}
$
has a negative component.
\end{proposition}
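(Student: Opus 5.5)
Write $\bb:=\kk-\bl+\overleftarrow{\bl}$ for the target multi-index, and for each $(a,j)$ with $j\ge0$ and $\ell_j^a\ge1$ write $\bb':=\kk-(\bl-\ee_j^a)+\overleftarrow{\bl-\ee_j^a}$ for the target of the predecessor index. The plan is to start from the recursion \eqref{eq:C-recursive} of Proposition~\ref{prop:C-recursion}, multiply both sides by $\bb!$, and show that each summand, after normalisation, becomes the corresponding summand of \eqref{eq:D-recursive}.

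The base case is immediate: $C_{\kk,\mathbf 0}=1$ and the target of $\bl=\mathbf 0$ is $\kk$, so $D_{\kk,\mathbf 0}=\kk!$ by Definition~\ref{def:D-coeff}.

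For the recursive step, the reindexing in the proof of Proposition~\ref{prop:C-recursion} gives
\[
\bb=\bb'-\ee_j^a+\ee_{j-1}^a .
\]
So $\bb'$ differs from $\bb$ only in two coordinates:
\[
b'^a_j=b^a_j+1=k_j^a-\ell_j^a+1+\ell_{j+1}^a,\qquad
b'^a_{j-1}=b^a_{j-1}-1.
\]
Here $b^a_{j-1}=k_{j-1}^a-\ell_{j-1}^a+\ell_j^a$ (when $j=0$ read $\ell_{-1}^a=0$; this is consistent with the lowering convention).

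Assume $\bb'\ge\mathbf0$, which is exactly when the predecessor term $D_{\kk,\bl-\ee_j^a}$ is nonzero. Then
\[
\bb!=\bb'!\,\frac{b^a_{j-1}}{b'^a_j}.
\]
The factor $(k_j^a-\ell_j^a+1+\ell_{j+1}^a)$ appearing in \eqref{eq:C-recursive} is precisely $b'^a_j$. Hence
\[
C_{\kk,\bl-\ee_j^a}\,b'^a_j\cdot \bb!=C_{\kk,\bl-\ee_j^a}\,\bb'!\,b^a_{j-1}=D_{\kk,\bl-\ee_j^a}\big(k_{j-1}^a-\ell_{j-1}^a+\ell_j^a\big).
\]
This is the desired summand.

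The main obstacle is the boundary bookkeeping, i.e.\ the terms where the factorials are not defined. There are two cases.
\begin{enumerate}
\item If $\bb'$ has a negative component, then $C_{\kk,\bl-\ee_j^a}=0$ by the convention after Proposition~\ref{prop:C-recursion}, and $D_{\kk,\bl-\ee_j^a}=0$ by the stated convention. So the term vanishes on both sides.
\item If $\bb'\ge0$ but $b'^a_j=0$, the division above is illegitimate. This would force $b^a_j=-1$, which is excluded by the hypothesis $\bb\ge\mathbf0$. So $b'^a_j\ge1$ always, and the identity $\bb!\,b'^a_j=\bb'!\,b^a_{j-1}$ can be checked directly by cancelling common factorials rather than dividing. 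When $b'^a_{j-1}=b^a_{j-1}-1$, nonnegativity of $\bb'$ gives $b^a_{j-1}\ge1$, so all factorials involved make sense.
\end{enumerate}
Summing over $(a,j)$ then yields \eqref{eq:D-recursive}. Negative-target terms contribute zero on both sides, and the identity holds for every nonzero $\bl$ with $\bb\ge\mathbf0$.
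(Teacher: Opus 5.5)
Your proposal is correct and follows essentially the same route as the paper: multiply the $C$-recursion by the target factorial, use $\bb'=\bb+\ee_j^a-\ee_{j-1}^a$ to obtain $\bb!\,b'^a_j=\bb'!\,b^a_{j-1}$, and dispose of the summands whose predecessor target is negative via the zero conventions. The differences are cosmetic. You divide by $b'^a_j=b^a_j+1\ge 1$, whereas the paper divides by $b^a_{j-1}$. The paper also observes that a negative $\bb'$ forces $b^a_{j-1}=0$, a fact your convention-based argument does not need.
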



\begin{proof}
The first equation in~\eqref{eq:D-recursive} follows immediately from~\eqref{eq:C-recursive} and
\eqref{eq:D-definition}.
Let $\boldsymbol\ell$ be a nonzero lowering multi-index such that
$\mathbf k-\boldsymbol\ell+\overleftarrow{\boldsymbol\ell}\ge\mathbf0$.
By~\eqref{eq:C-recursive} and~\eqref{eq:D-definition},
\begin{align}
D_{\kk,\bl}
 =
C_{\kk,\bl}\,(\kk-\bl+\overleftarrow{\bl})!
=\sum_{a\in A}
\sum_{\substack{j\ge 0\\ \ell_j^a\ge 1}}
C_{\kk,\bl-\ee_j^a}
\big(k_j^a-\ell_j^a+1+\ell_{j+1}^a\big)
(\kk-\bl+\overleftarrow{\bl})!.
\label{eq:D-proof-1}
\end{align}
Fix one term in the right-hand side and set
\begin{equation*}
\alpha:=\kk-\bl+\overleftarrow{\bl},
\quad
\beta:=\kk-(\bl-\ee_j^a)+\overleftarrow{\bl-\ee_j^a}.
\end{equation*}
Since $j\ge 0$, we have
\begin{equation*}
\beta=\alpha+\ee_j^a-\ee_{j-1}^a.
\end{equation*}
Therefore all coordinates of $\alpha$ and $\beta$ coincide except the
$(a,j)$- and $(a,j-1)$-coordinates, and more precisely
\begin{equation*}
\beta_j^a=\alpha_j^a+1,
\quad
\beta_{j-1}^a=\alpha_{j-1}^a-1.
\end{equation*}
If $\beta$ has a negative component, then necessarily
$
\alpha_{j-1}^a=0.
$
In this case
\[
k_{j-1}^a-\ell_{j-1}^a+\ell_j^a=\alpha_{j-1}^a=0,
\]
and by Definition~\ref{def:D-coeff} the term
$D_{\kk,\bl-\ee_j^a}$ is zero.  Hence the corresponding summand in the desired
recursion is zero.  Moreover, by the convention after~\eqref{eq:C-expansion},
$C_{\kk,\bl-\ee_j^a}=0$, since the target
$$\kk-(\bl-\ee_j^a)+\overleftarrow{\bl-\ee_j^a}=\beta$$
has a negative component.  Thus the corresponding summand in~\eqref{eq:D-proof-1}
also vanishes.

It remains to consider the case where $\beta$ is componentwise nonnegative.  Then
$\alpha_{j-1}^a\ge1$, and all factorials below are well-defined.  Since
\[
\beta_j^a=\alpha_j^a+1,
\qquad
\beta_{j-1}^a=\alpha_{j-1}^a-1,
\]
we have
\[
\beta!
=
\alpha!\frac{\alpha_j^a+1}{\alpha_{j-1}^a}.
\]
Using
\[
\alpha_j^a+1
=
k_j^a-\ell_j^a+1+\ell_{j+1}^a, \quad \alpha_{j-1}^a
=
k_{j-1}^a-\ell_{j-1}^a+\ell_j^a,
\]
we obtain
\begin{equation}\label{eq:key-factorial-identity}
\bigl(k_j^a-\ell_j^a+1+\ell_{j+1}^a\bigr)\alpha!
=
\bigl(k_{j-1}^a-\ell_{j-1}^a+\ell_j^a\bigr)\beta!.
\end{equation} 
Substituting~\eqref{eq:key-factorial-identity} into~\eqref{eq:D-proof-1} gives
\begin{align*}
D_{\kk,\bl}
=
\sum_{a\in A}
\sum_{\substack{j\ge 0\\ \ell_j^a\ge 1}}
C_{\kk,\bl-\ee_j^a}\,
\beta!\,
\big(k_{j-1}^a-\ell_{j-1}^a+\ell_j^a\big)
=
\sum_{a\in A}
\sum_{\substack{j\ge 0\\ \ell_j^a\ge 1}}
D_{\kk,\bl-\ee_j^a}
\big(k_{j-1}^a-\ell_{j-1}^a+\ell_j^a\big),
\end{align*}
which is exactly~\eqref{eq:D-recursive}.
\end{proof}

By~\cite[Theorem~13]{ZGM2026}, for every multi-index $\kk$ with
$\wt(\kk)=-1$, one has
\begin{equation}\label{eq:lot-coproduct-explicit}
\Delta_{\mathrm{LOT}}(\xx^\kk)
=
\sum_{r\ge0}
\sum_{\substack{
\kk=\kk^{1}+\cdots+\kk^{r}+\bb\\
\wt(\kk^{i})=-1,\ \forall i\in [r]
}}
\frac{\kk!}{\sigma(\xx^{\kk^{1}}\ext\cdots\ext\xx^{\kk^{r}})\,\bb!}
\xx^{\kk^{1}}\ext\cdots\ext\xx^{\kk^{r}} \otimes \bar\partial^{\,r}\xx^\bb,
\end{equation}
where $\sigma(\xx^{\kk^{1}}\ext\cdots\ext\xx^{\kk^{r}})$ is the symmetry factor of $\xx^{\kk^{1}}\ext\cdots\ext\xx^{\kk^{r}}$.
As a direct consequence of Proposition~\ref{prop:C-recursion}, we obtain a refinement of the above formula. 

\begin{corollary}\label{coro:cut-coproduct-refined}
For every multi-index $\kk$ with $\wt(\kk)=-1$, the coproduct $\Delta_{\mathrm{LOT}}$ given in~(\ref{eq:lot-coproduct-explicit})
admits the refinements  
\begin{align}
\Delta_{\mathrm{LOT}}(\xx^\kk)
&=
\sum_{r\ge 0}
\sum_{\substack{
\kk=\kk^{1}+\cdots+\kk^{r}+\bb\\
\wt(\kk^{i})=-1,\ \forall i\in [r] 
}}
\frac{\kk!}{\sigma(\xx^{\kk^{1}}\ext\cdots\ext \xx^{\kk^{r}})\,\bb!}
\sum_{\substack{\bl\ge \mathbf0\\ |\bl|=r}}
C_{\bb,\bl}\,
\xx^{\kk^{1}}\ext\cdots\ext \xx^{\kk^{r}}\otimes \xx^{\bb-\bl+\overleftarrow{\bl}},
\label{eq:cut-coproduct-C}
\\[1ex]
&=
\sum_{r\ge 0}
\sum_{\substack{
 \kk=\kk^{1}+\cdots+\kk^{r}+\bb\\
\wt(\kk^{i})=-1,\ \forall i\in [r]
}}
\frac{\kk!}{\sigma(\xx^{\kk^{1}}\ext\cdots\ext \xx^{\kk^{r}})\,\bb!}
\sum_{\substack{\bl\ge\mathbf0\\ |\bl|=r\\
\bb-\bl+\overleftarrow{\bl}\ge\mathbf0}}
\frac{D_{\bb,\bl}}{(\bb-\bl+\overleftarrow{\bl})!}\,
\xx^{\kk^{1}}\ext\cdots\ext \xx^{\kk^{r}}\otimes \xx^{\bb-\bl+\overleftarrow{\bl}}.
\label{eq:cut-coproduct-D}
\end{align}
\end{corollary}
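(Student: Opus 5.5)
The plan is to start from the admissible-cut formula~\eqref{eq:lot-coproduct-explicit} of~\cite[Theorem~13]{ZGM2026} and rewrite only the right tensor factor. In each summand, fix $r\ge0$, a decomposition $\kk=\kk^{1}+\cdots+\kk^{r}+\bb$ with $\wt(\kk^{i})=-1$, and the scalar $\kk!/(\sigma(\xx^{\kk^1}\ext\cdots\ext\xx^{\kk^r})\,\bb!)$. The left tensor factor and the scalar do not involve $\bar\partial$, so by linearity of the tensor product in the second slot it suffices to expand $\bar\partial^{\,r}\xx^\bb$. For this I would invoke Proposition~\ref{prop:C-recursion} with $\kk$ replaced by $\bb$:
\[
\bar\partial^{\,r}\xx^\bb=\sum_{|\bl|=r}C_{\bb,\bl}\,\xx^{\bb-\bl+\overleftarrow{\bl}} .
\]
The sum runs over lowering multi-indices, which are in particular $\ge\mathbf0$, as in the indexing of~\eqref{eq:cut-coproduct-C}. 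Substituting this into~\eqref{eq:lot-coproduct-explicit} gives~\eqref{eq:cut-coproduct-C} directly.

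I must check that the inner sum is finite. The set of lowering multi-indices $\bl$ with $|\bl|=r$ is infinite. However, $C_{\bb,\bl}=0$ by convention whenever $\bb-\bl+\overleftarrow{\bl}$ has a negative component. Nonnegativity of $\bb-\bl+\overleftarrow{\bl}$ forces $\Lambda_j^a\ge0$ in the sense of Proposition~\ref{prop:unique-shift}, and hence $\ell_j^a\le\sum_{m\ge j}b_m^a$. This vanishes for $j$ beyond the support of $\bb$, so only finitely many $\bl$ contribute.

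For~\eqref{eq:cut-coproduct-D}, I would restrict the inner sum to those $\bl$ with $\bb-\bl+\overleftarrow{\bl}\ge\mathbf0$, which discards only zero terms. On the remaining terms, Definition~\ref{def:D-coeff} gives
\[
C_{\bb,\bl}=\frac{D_{\bb,\bl}}{(\bb-\bl+\overleftarrow{\bl})!}.
\]
The factorial is positive there, so this division is legitimate.

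The main obstacle is not computational but bookkeeping. Terms with negative exponents must be shown to be consistently zero on both sides. In particular, the uniqueness statement in Proposition~\ref{prop:C-recursion}, which relies on Proposition~\ref{prop:unique-shift}, guarantees that distinct $\bl$ give distinct target monomials. Hence no hidden cancellation or merging of terms occurs, and the coefficients $C_{\bb,\bl}$ are well defined.
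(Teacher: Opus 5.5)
Your proposal is correct and follows essentially the same route as the paper: substitute the expansion of $\bar\partial^{\,r}\xx^\bb$ from Proposition~\ref{prop:C-recursion} into~\eqref{eq:lot-coproduct-explicit}, then, on the terms with nonnegative target, replace $C_{\bb,\bl}$ by $D_{\bb,\bl}/(\bb-\bl+\overleftarrow{\bl})!$ using Definition~\ref{def:D-coeff}. Your finiteness check, which bounds $\ell_j^a\le\sum_{m\ge j}b_m^a$ via Proposition~\ref{prop:unique-shift}, is a correct extra detail that the paper leaves implicit.
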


\begin{proof}
By Proposition~\ref{prop:C-recursion}, for every remainder $\bb$ and every integer $r\ge 0$,
\begin{equation}\label{eq:barpartial-bb-expand}
\bar\partial^{\,r}\xx^\bb
=
\sum_{\substack{\bl\ge \mathbf0, |\bl|=r}}
C_{\bb,\bl}\,
\xx^{\bb-\bl+\overleftarrow{\bl}}.
\end{equation}
Substituting~\eqref{eq:barpartial-bb-expand} into~\eqref{eq:lot-coproduct-explicit} gives
\eqref{eq:cut-coproduct-C}.
For those $\bl$ such that
$\bb-\bl+\overleftarrow{\bl}\ge\mathbf0$, Definition~\ref{def:D-coeff} gives
\begin{equation}\label{eq:D-over-C}
D_{\bb,\bl}
=
C_{\bb,\bl}\,
(\bb-\bl+\overleftarrow{\bl})!.
\end{equation}
Whenever $C_{\bb,\bl}\neq 0$, the multi-index $\bb-\bl+\overleftarrow{\bl}$ is precisely the exponent
multi-index of a monomial appearing in~\eqref{eq:barpartial-bb-expand}, hence all its coordinates are
nonnegative and its factorial is well-defined.
Solving~\eqref{eq:D-over-C} for $C_{\bb,\bl}$ and inserting the result into~\eqref{eq:cut-coproduct-C}
yields~\eqref{eq:cut-coproduct-D}.
\end{proof}

We next package the iterates of $\bar\partial$ into coefficient generating functions.

\begin{definition}\label{def:coefficient-gf}
For every multi-index $\kk$, define the {\bf polynomial in $u$ with coefficients in $N(A)$} by
\begin{equation}\label{eq:Cgf-definition}
\mathcal C_{\kk}(u;\xx)
:=
\sum_{r\ge 0}\frac{u^r}{r!}\bar\partial^r \xx^\kk.
\end{equation}
Since $\bar\partial$ is locally nilpotent on each monomial, the sum in
\eqref{eq:Cgf-definition} is finite.
For every target multi-index $\bb$, define the scalar coefficient generating functions
\begin{equation*}
\mathcal C_{\kk,\bb}(u):=[\xx^\bb]\mathcal C_{\kk}(u;\xx),
\quad
\mathcal D_{\kk,\bb}(u):=\bb!\,\mathcal C_{\kk,\bb}(u).
\end{equation*}
\end{definition}

The coefficient generating function admits a simple factorized expression because the exponential
of a derivation acts multiplicatively.

\begin{proposition}
For every multi-index $\kk$,
\begin{equation}\label{eq:factorized-Cgf}
\mathcal C_{\kk}(u;\xx)
=
\prod_{a\in A}\prod_{j\ge -1}
\left(
\sum_{m=0}^{j+1}\frac{u^m}{m!}x_{j-m}^a
\right)^{k_j^a}.
\end{equation}
Equivalently,
\begin{equation}\label{eq:factorized-Cgf-s}
\mathcal C_{\kk}(u;\xx)
=
\prod_{a\in A}\prod_{j\ge -1}
\left(
\sum_{s=-1}^{j}\frac{u^{j-s}}{(j-s)!}x_s^a
\right)^{k_j^a}.
\end{equation}
\end{proposition}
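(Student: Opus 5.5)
The plan is to identify $\mathcal C_{\kk}(u;\xx)$ with $\exp(u\bar\partial)$ applied to $\xx^\kk$, and then use that the exponential of a derivation is an algebra homomorphism. Work in the ring $N(A)[u]$ (or $N(A)[[u]]$). Extend $\bar\partial$ $u$-linearly; it is still a derivation. Since $\bar\partial$ lowers the index $j$ and kills $x_{-1}^a$, it is locally nilpotent on every polynomial. Hence $E_u:=\sum_{r\ge0}\frac{u^r}{r!}\bar\partial^r$ is a well-defined $\mathbb K[u]$-linear map, and by Definition~\ref{def:coefficient-gf} we have $\mathcal C_\kk(u;\xx)=E_u(\xx^\kk)$.

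\textbf{Step 1: $E_u$ is multiplicative.} For $f,g\in N(A)$, the Leibniz rule gives
\[
\bar\partial^r(fg)=\sum_{p+q=r}\binom{r}{p}\bar\partial^p f\,\bar\partial^q g.
\]
This follows by a straightforward induction on $r$. Dividing by $r!$ and summing over $r$ yields $E_u(fg)=E_u(f)E_u(g)$. Both sides are finite sums by local nilpotence, so the rearrangement is harmless. Since $E_u(1)=1$, this gives
\[
E_u(\xx^\kk)=\prod_{a,j}E_u(x_j^a)^{k_j^a}.
\]

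\textbf{Step 2: values on generators.} From~\eqref{eq:lowering-on-generators}, induction gives $\bar\partial^m x_j^a=x_{j-m}^a$ for $0\le m\le j+1$ and $\bar\partial^m x_j^a=0$ for $m>j+1$. The case $m=j+1$ gives $x_{-1}^a$, and one further application gives $0$. Therefore
\[
E_u(x_j^a)=\sum_{m=0}^{j+1}\frac{u^m}{m!}x^a_{j-m},
\]
which proves~\eqref{eq:factorized-Cgf}.

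\textbf{Step 3: reindexing.} Substituting $s=j-m$, so that $s$ runs from $j$ down to $-1$, gives~\eqref{eq:factorized-Cgf-s} directly.

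The only step needing any care is Step 1, namely justifying multiplicativity of the exponential of a derivation. If one prefers to avoid the exponential formalism, an alternative is to compare coefficients of $u^r/r!$ on both sides. The coefficient on the right is a multinomial expansion of $\prod$ over the factors, which matches the iterated Leibniz formula for $\bar\partial^r$ applied to a product of $|\kk|$ generators.
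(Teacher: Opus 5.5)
Your proposal is correct and follows essentially the same route as the paper. The paper likewise uses the iterated Leibniz rule to show that $E_u=\sum_{r\ge0}\frac{u^r}{r!}\bar\partial^r$ is an algebra morphism, evaluates it on generators via $\bar\partial^m(x_j^a)=x_{j-m}^a$ for $0\le m\le j+1$ (and $0$ beyond), and obtains the second form by the change of variable $s=j-m$.
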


\begin{proof}
Let $P,Q\in N(A)$.
Since $\bar\partial$ is a derivation, the iterated Leibniz rule holds:
\begin{equation*}
\bar\partial^r(PQ)
=
\sum_{s=0}^{r}\binom{r}{s}\bar\partial^s(P)\bar\partial^{r-s}(Q).
\end{equation*}
Therefore
\begin{align*}
\sum_{r\ge 0}\frac{u^r}{r!}\bar\partial^r(PQ)
&=
\sum_{r\ge 0}\frac{u^r}{r!}
\sum_{s=0}^{r}\binom{r}{s}\bar\partial^s(P)\bar\partial^{r-s}(Q)
\notag\\
&=
\sum_{r\ge 0}\sum_{s=0}^{r}
\frac{u^s}{s!}\bar\partial^s(P)\,
\frac{u^{r-s}}{(r-s)!}\bar\partial^{r-s}(Q)
\notag\\
&=
\left(\sum_{s\ge 0}\frac{u^s}{s!}\bar\partial^s(P)\right)
\left(\sum_{t\ge 0}\frac{u^t}{t!}\bar\partial^t(Q)\right).
\end{align*}
Hence the map
\begin{equation*}
E_u:=\sum_{r\ge 0}\frac{u^r}{r!}\bar\partial^r: N(A) \longrightarrow N(A)
\end{equation*}
is an algebra morphism.
Now let $a\in A$ and $j\ge -1$.
By~\eqref{eq:lowering-on-generators},
\begin{equation*}
\bar\partial^m(x_j^a)=
\begin{cases}
x_{j-m}^a, & 0\le m\le j+1,\\[1ex]
0, & m\ge j+2.
\end{cases}
\end{equation*}
Therefore
\begin{equation*}
E_u(x_j^a)
=
\sum_{m=0}^{j+1}\frac{u^m}{m!}x_{j-m}^a.
\end{equation*}
Since
\begin{equation*}
\xx^\kk=\prod_{a\in A}\prod_{j\ge -1}(x_j^a)^{k_j^a},
\end{equation*}
and the product is finite, the multiplicativity of $E_u$ gives
\begin{align*}
\mathcal C_{\kk}(u;\xx)
=
E_u(\xx^\kk)
=
\prod_{a\in A}\prod_{j\ge -1}E_u(x_j^a)^{k_j^a}
=
\prod_{a\in A}\prod_{j\ge -1}
\left(
\sum_{m=0}^{j+1}\frac{u^m}{m!}x_{j-m}^a
\right)^{k_j^a},
\end{align*}
which is exactly~\eqref{eq:factorized-Cgf}.
The form~\eqref{eq:factorized-Cgf-s} is obtained by the change of variable
$s=j-m$.
\end{proof}

To extract individual transition coefficients from the factorized expression, we introduce
transport arrays from a source multi-index to a target multi-index.

\begin{definition}
Let $\kk$ and $\bb$ be multi-indices on
$I_A$.
A {\bf transport array} from $\kk$ to $\bb$ is a family
\begin{equation*}
\mathbf n=(n_{a,j,s})_{a\in A,\,-1\le s\le j}
\end{equation*}
of nonnegative integers such that
\begin{equation*}
\sum_{s=-1}^{j} n_{a,j,s}=k_j^a
\quad
\text{for every } a\in A,\ j\ge -1,
\end{equation*}
and
\begin{equation}\label{eq:transport-target}
\sum_{j\ge s} n_{a,j,s}=b_s^a
\quad
\text{for every } a\in A,\ s\ge -1.
\end{equation}
The set of all transport arrays from $\kk$ to $\bb$ is denoted by
$\mathcal N(\kk,\bb).$
\end{definition}

The scalar coefficient generating functions can be identified with the shift coefficients as follows.

\begin{theorem}\label{thm:explicit-transition}
For any multi-indices $\kk$ and $\bb$, one has
\begin{equation}\label{eq:explicit-transition}
\mathcal C_{\kk,\bb}(u)
=
\sum_{\mathbf n\in\mathcal N(\kk,\bb)}
\left(
\prod_{a\in A}\prod_{j\ge -1}
\frac{k_j^a!}{\prod_{s=-1}^{j} n_{a,j,s}!}
\right)
\frac{
u^{\sum\limits_{a\in A}\sum\limits_{j\ge -1}\sum\limits_{s=-1}^{j}(j-s)n_{a,j,s}}
}{
\prod\limits_{a\in A}\prod\limits_{j\ge -1}\prod\limits_{s=-1}^{j}(j-s)!^{\,n_{a,j,s}}
}.
\end{equation}
In particular, $\mathcal C_{\kk,\bb}(u)=0$ if $\mathcal N(\kk,\bb)=\emptyset$.
\end{theorem}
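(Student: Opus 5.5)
The plan is to start from the factorized expression~\eqref{eq:factorized-Cgf-s} and extract the coefficient of $\xx^\bb$ by expanding each power with the multinomial theorem. For fixed $a\in A$ and $j\ge -1$, the factor
\[
\Bigl(\sum_{s=-1}^{j}\frac{u^{j-s}}{(j-s)!}x_s^a\Bigr)^{k_j^a}
\]
expands as a sum over families $(n_{a,j,s})_{-1\le s\le j}$ of nonnegative integers with $\sum_{s}n_{a,j,s}=k_j^a$. The corresponding term is
\[
\frac{k_j^a!}{\prod_{s=-1}^{j}n_{a,j,s}!}\prod_{s=-1}^{j}\Bigl(\frac{u^{j-s}}{(j-s)!}\Bigr)^{n_{a,j,s}}(x_s^a)^{n_{a,j,s}}.
\]
Only finitely many pairs $(a,j)$ have $k_j^a\neq 0$. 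The factors with $k_j^a=0$ contribute $1$ and force $n_{a,j,s}=0$ for all $s$. So the full product is a finite sum over arrays $\mathbf n=(n_{a,j,s})$ satisfying the first (row-sum) condition of a transport array.

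Next, I will identify the monomial attached to each array $\mathbf n$. Collecting powers of $x_s^a$ across all $j\ge s$, it is $\prod_{a,s}(x_s^a)^{\sum_{j\ge s}n_{a,j,s}}$. This equals $\xx^\bb$ exactly when the target condition~\eqref{eq:transport-target} holds, that is, when $\mathbf n\in\mathcal N(\kk,\bb)$. Distinct multi-indices give linearly independent monomials in $N(A)$. Hence $[\xx^\bb]\mathcal C_\kk(u;\xx)$ is the sum, over $\mathbf n\in\mathcal N(\kk,\bb)$, of the scalar parts of these terms.

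The scalar part of each term is the product of the multinomial coefficients $k_j^a!/\prod_s n_{a,j,s}!$ with $\prod_{a,j,s}u^{(j-s)n_{a,j,s}}/(j-s)!^{n_{a,j,s}}$. Combining the powers of $u$ gives exactly the exponent and denominator in~\eqref{eq:explicit-transition}. If $\mathcal N(\kk,\bb)=\emptyset$, no term produces $\xx^\bb$, so the coefficient is $0$.

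There is no real obstacle here. The only care needed is the bookkeeping: the multinomial expansion is applied independently to each factor $(a,j)$, and the resulting double-indexed exponents regroup by the target index $s$ to give the column-sum condition. I will also note that the constraint $s\le j$ is built into the index set of the array, which matches the range of the inner sum in~\eqref{eq:factorized-Cgf-s}.
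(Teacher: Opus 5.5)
Your proposal is correct and follows the same route as the paper: expand each factor of the factorized form \eqref{eq:factorized-Cgf-s} by the multinomial theorem, multiply over the finitely many pairs $(a,j)$ with $k_j^a\neq 0$, and extract the coefficient of $\xx^\bb$ by imposing the column-sum condition \eqref{eq:transport-target}. Your remarks that factors with $k_j^a=0$ force the corresponding $n_{a,j,s}$ to vanish, and that distinct monomials are linearly independent, are small details the paper leaves implicit.
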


\begin{proof}
Starting from~\eqref{eq:factorized-Cgf-s}, we expand each factor by the multinomial theorem:
\begin{align}
\left(
\sum_{s=-1}^{j}\frac{u^{j-s}}{(j-s)!}x_s^a
\right)^{k_j^a}
&=
\sum_{\substack{(n_{a,j,s})_{-1\le s\le j}\\ \sum_{s=-1}^{j}n_{a,j,s}=k_j^a}}
\frac{k_j^a!}{\prod_{s=-1}^{j} n_{a,j,s}!}
\prod_{s=-1}^{j}
\left(
\frac{u^{j-s}}{(j-s)!}x_s^a
\right)^{n_{a,j,s}}.
\label{eq:multinomial-step}
\end{align}
Since $\kk$ has finite support, only finitely many pairs $(a,j)$ contribute nontrivially.        Multiplying
\eqref{eq:multinomial-step} over all $(a,j)$ gives
\begin{align}
\mathcal C_{\kk}(u;\xx)
&=
\sum_{\substack{\mathbf n=(n_{a,j,s})\\ \sum_{s=-1}^{j}n_{a,j,s}=k_j^a}}
\left(
\prod_{a\in A}\prod_{j\ge -1}
\frac{k_j^a!}{\prod_{s=-1}^{j} n_{a,j,s}!}
\right)
\frac{
u^{\sum\limits_{a\in A}\sum\limits_{j\ge -1}\sum\limits_{s=-1}^{j}(j-s)n_{a,j,s}}
}{
\prod\limits_{a\in A}\prod\limits_{j\ge -1}\prod\limits_{s=-1}^{j}(j-s)!^{\,n_{a,j,s}}
}
\prod_{a\in A}\prod_{s\ge -1}
(x_s^a)^{\sum_{j\ge s}n_{a,j,s}}.
\label{eq:Cgf-expanded}
\end{align}
The exponent of $x_s^a$ in~\eqref{eq:Cgf-expanded} is exactly
$\sum_{j\ge s}n_{a,j,s}$.
Therefore the coefficient of the monomial $\xx^\bb$ is obtained precisely by imposing the target
conditions~\eqref{eq:transport-target}.
This yields~\eqref{eq:explicit-transition}.
The final assertion is immediate.
\end{proof}

\begin{corollary}
Let $\kk$ and $\bb$ be multi-indices on $I_A$.
\begin{enumerate}
\item If there exists no lowering multi-index $\bl$  such that
\begin{equation}\label{eq:reachable-target}
\bb=\kk-\bl+\overleftarrow{\bl},
\end{equation}
then
\begin{equation}\label{eq:transition-zero}
\mathcal C_{\kk,\bb}(u)=0,
\quad
\mathcal D_{\kk,\bb}(u)=0.
\end{equation} 

\item If such a lowering multi-index $\bl$ exists, then it is unique by Proposition~\ref{prop:unique-shift}, and
\begin{equation}\label{eq:transition-C}
\mathcal C_{\kk,\bb}(u)
=
\frac{u^{|\bl|}}{|\bl|!}\,C_{\kk,\bl}, \quad \mathcal D_{\kk,\bb}(u)
=
\frac{u^{|\bl|}}{|\bl|!}\,D_{\kk,\bl}.
\end{equation}
\end{enumerate}
\end{corollary}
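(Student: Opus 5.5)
The plan is to read off the coefficient of $\xx^\bb$ directly from the definition \eqref{eq:Cgf-definition} of $\mathcal C_{\kk}(u;\xx)$ and combine it with the monomial expansion \eqref{eq:C-expansion} of Proposition~\ref{prop:C-recursion}. Substituting \eqref{eq:C-expansion} into \eqref{eq:Cgf-definition} gives
\begin{equation*}
\mathcal C_{\kk}(u;\xx)=\sum_{r\ge 0}\frac{u^r}{r!}\sum_{|\bl|=r}C_{\kk,\bl}\,\xx^{\kk-\bl+\overleftarrow{\bl}}
=\sum_{\bl}\frac{u^{|\bl|}}{|\bl|!}\,C_{\kk,\bl}\,\xx^{\kk-\bl+\overleftarrow{\bl}},
\end{equation*}
where $\bl$ runs over lowering multi-indices. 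By the convention after \eqref{eq:C-expansion}, terms with negative exponents vanish. Hence $[\xx^\bb]\mathcal C_{\kk}(u;\xx)$ is the sum of $\frac{u^{|\bl|}}{|\bl|!}C_{\kk,\bl}$ over those lowering $\bl$ with $\kk-\bl+\overleftarrow{\bl}=\bb$.

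For part (1), this index set is empty by hypothesis, so $\mathcal C_{\kk,\bb}(u)=0$. Then $\mathcal D_{\kk,\bb}(u)=\bb!\,\mathcal C_{\kk,\bb}(u)=0$. If $\bb$ has a negative component, $\xx^\bb$ is not a monomial at all and both sides are zero by convention.

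For part (2), I will check the hypotheses of Proposition~\ref{prop:unique-shift} so that its uniqueness clause applies. The decoration balance \eqref{eq:decoration-balance} follows from $\bb=\kk-\bl+\overleftarrow{\bl}$ by summing over $j$ for fixed $a$: the shift only moves mass from $(a,j)$ to $(a,j-1)$. Actually, uniqueness can be seen directly from the necessity argument in that proof, which recovers $\ell_j^a=\sum_{m\ge j}(k_m^a-b_m^a)$. So the sum has exactly one term, and $\mathcal C_{\kk,\bb}(u)=\frac{u^{|\bl|}}{|\bl|!}C_{\kk,\bl}$.

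Multiplying by $\bb!$ gives $\mathcal D_{\kk,\bb}(u)=\frac{u^{|\bl|}}{|\bl|!}\,C_{\kk,\bl}\,\bb!$. When $\bb\ge\mathbf 0$, Definition~\ref{def:D-coeff} identifies $C_{\kk,\bl}(\kk-\bl+\overleftarrow{\bl})!$ with $D_{\kk,\bl}$. When $\bb$ has a negative component, both sides vanish by the stated conventions.

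The only delicate point is this bookkeeping of conventions for negative components, so that the two sides agree in every case. The rest is a direct coefficient comparison, with no real obstacle.
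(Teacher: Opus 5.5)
Your proposal is correct and follows essentially the same route as the paper. Both substitute \eqref{eq:C-expansion} into \eqref{eq:Cgf-definition}, read off the coefficient of $\xx^\bb$ using the uniqueness in Proposition~\ref{prop:unique-shift}, and then multiply by $\bb!=(\kk-\bl+\overleftarrow{\bl})!$ to pass to $D_{\kk,\bl}$; your explicit check of the balance hypothesis \eqref{eq:decoration-balance} and of the negative-component conventions simply makes visible what the paper leaves implicit.
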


\begin{proof}
By Definition~\ref{def:coefficient-gf} and Proposition~\ref{prop:C-recursion},
\begin{align*}
\mathcal C_{\kk}(u;\xx) =\sum_{r\ge 0}\frac{u^r}{r!}\bar\partial^r \xx^\kk
 =\sum_{r\ge 0}\frac{u^r}{r!}
\sum_{\substack{|\bl|=r}}
C_{\kk,\bl}\,\xx^{\kk-\bl+\overleftarrow{\bl}}
=\sum_{\bl}
\frac{u^{|\bl|}}{|\bl|!}
C_{\kk,\bl}\,\xx^{\kk-\bl+\overleftarrow{\bl}}.
\end{align*}
Taking the coefficient of $\xx^\bb$ and using Proposition~\ref{prop:unique-shift}, we obtain
\eqref{eq:transition-zero} and the first identity in~\eqref{eq:transition-C}.
Finally, if~\eqref{eq:reachable-target} holds, then
\begin{equation*}
\bb!=(\kk-\bl+\overleftarrow{\bl})!,
\end{equation*}
and so by Definitions~\ref{def:D-coeff} and~\ref{def:coefficient-gf},
\begin{align*}
\mathcal D_{\kk,\bb}(u)
=\bb!\,\mathcal C_{\kk,\bb}(u)
=(\kk-\bl+\overleftarrow{\bl})!\,
\frac{u^{|\bl|}}{|\bl|!}\,C_{\kk,\bl}
=\frac{u^{|\bl|}}{|\bl|!}\,D_{\kk,\bl},
\end{align*}
which is the second identity in~\eqref{eq:transition-C}.
\end{proof}

\noindent
{\bf Acknowledgments.} This work is supported by the National Natural Science Foundation of China (12571019), the Natural Science Foundation of Gansu Province (25JRRA644) and Innovative Fundamental Research Group Project of Gansu Province (23JRRA684).

\vskip 0.2in

\noindent
{\bf Declaration of interests. } The authors have no conflicts of interest to disclose.

\noindent
{\bf Data availability. } Data sharing is not applicable as no new data were created or analyzed.


\end{CJK*}
\end{document}